\documentclass[a4paper,12pt]{article}
\usepackage{amsmath,amsfonts,amssymb,amsthm,amscd}
\usepackage[arrow, matrix, curve]{xy}
\usepackage[colorlinks=true,citecolor=blue]{hyperref}

\font\cmssl=cmss10 at 12 pt

\usepackage{slashed}
\usepackage[normalem]{ulem}

\newtheorem{thm}{Theorem}
\newtheorem{lem}[thm]{Lemma}
\newtheorem{prop}[thm]{Proposition}
\newtheorem{defn}[thm]{Definition}

\newtheorem{rem}[thm]{Remark}
\newtheorem{notation}[thm]{Notation}

\title{Integrability of generalized structures on odd exact  Courant algebroids using generalized connections}

\date{\today}

\author{Vicente Cort\'es,  Liana David and Marius Mirea}

\begin{document}

\maketitle

{\bf Abstract:}  Odd exact Courant algebroids constitute a simple class of transitive Courant algebroids. 
Their underlying vector bundle is of odd rank and differs from a generalized tangent  bundle by the addition of a line bundle. 
In this article, we study natural analogues of almost complex and almost  pseudo-Hermitian structures on such Courant algebroids, 
which are called $B_n$-generalized almost complex/pseudo-Hermitian structures. The corresponding integrable structures are know as $B_n$-generalized complex
structures and $B_n$-generalized pseudo-K\"ahler structures, respectively.  We  characterize the integrability of 
$B_n$-generalized almost complex/pseudo-Hermitian structures 
on odd exact Courant algebroids 
in terms of  existence of adapted generalized connections. 
We describe the  affine spaces of adapted generalized connections for such  integrable generalized structures.

\medskip\noindent
{\it MSc classification:} 53D18 (Generalized geometry a la Hitchin)

\medskip\noindent
{\it Key words:} Courant algebroids of type $B_n$, generalized complex structures, generalized K\"ahler structures

\section{Introduction}

An important feature of integrable geometric  structures  on a manifold 
(like a complex or a symplectic structure)  is the existence of an adapted connection, that is, a  torsion-free connection  which preserves the structure in question. 
For  structures which involve a metric (e.g.\   K\"{a}hler or  hyper-K\"{a}hler structures) the integrability is usually defined by the condition that the Levi-Civita  connection is an adapted connection.  

Using these ideas as a guiding principle, a characterization for the  integrability of 
various generalized structures 
on a Courant algebroid of even rank, in terms of  adapted generalized connections, was developed by the first two authors in \cite{moscow}.\ 

In this paper we approach the analogous question  in the setting of odd exact Courant algebroids. Such Courant algebroids   were introduced in \cite{rubio} as  a counter-part  of exact Courant algebroids obtained by replacing the structure group $\mathrm{O}(n,n)$ (Cartan type $D_n$) by $\mathrm{O}(n+1,n)$ (type $B_n$) and 
may be seen as one of the simplest classes of transitive Courant algebroids of odd rank. 
Geometric structures on them are usually referred to as  structures of type $B_{n}$ and play the role of CR  and related structures in generalized geometry.  
$B_{n}$-generalized complex  and  $B_{n}$-generalized pseudo-K\"{a}hler structures  were introduced 
in \cite{rubio} and \cite{moscow} respectively and showed to share common  features with the corresponding structures on exact Courant algebroids. 
For example, they admit a characterization in terms of spinors and, alternatively, in terms of ordinary tensors on the base manifold (usually referred to as components). 
Specifically, $B_{n}$-generalized complex structures were described in \cite{rubio} in terms of spinors, while 
$B_{n}$-generalized pseudo-K\"{a}hler structures were described in \cite{moscow}  in terms of components, in analogy  with the description of generalized K\"{a}hler   structures
on an exact Courant algebroid over a manifold $M$  in terms of bi-Hermitian data on $M$ (see \cite{gualtieri-thesis}).\

Our  main results in this paper (see Theorems  \ref{complex-conn-cor} and \ref{GK:thm}) provide  characterizations for the  integrability of  $B_{n}$-generalized almost complex 
and almost pseudo-Hermitian  structures  in terms of existence of adapted generalized connections.
In Theorem \ref{adapted}  we  describe the affine spaces of adapted  generalized connections for such generalized structures.  For completeness, 
in Theorem~\ref{ordinaryGK:thm}  we give a similar description for the affine spaces of adapted  generalized connections for generalized complex and generalized K\"ahler structures on arbitrary Courant algebroids  of even rank. Note that this does not include the case of odd exact Courant algebroids, since the latter have odd rank  and 
do not admit any generalized almost complex structures   
but only $B_n$-generalized almost complex structures.

One feature which distinguishes generalized geometry from classical geometry is the non-uniqueness of Levi-Civita  generalized connections  
for a given generalized metric.  This  is  also reflected by the non-uniqueness of adapted  generalized connections for $B_{n}$-generalized pseudo-K\"{a}hler structures, according to Theorem \ref{adapted} ii).

\bigskip

{\bf Acknowledgements.} Research of VC is funded by the Deutsche\linebreak Forschungsgemeinschaft 
(DFG, German Research Foundation) under Germany's Excellence Strategy, EXC 2121 ``Quantum Universe,'' 390833306 and under -- SFB-Gesch\"aftszeichen 1624 -- Projektnummer 506632645.

\section{Preliminary material}

This section is intended to fix notation.   We recall various  facts we need on 
generalized connections  (see e.g.\ \cite{garcia}) 
and geometric structures on odd exact Courant algeboids (see  \cite{JGA,rubio} for more details).

Let $(E, \langle \cdot , \cdot \rangle , \pi , [\cdot , \cdot ])$ be an odd exact Courant algebroid over a manifold $M$ of dimension $n$, with scalar product
$\langle \cdot , \cdot \rangle$ of signature $(n+1,n)$, anchor $\pi : E \rightarrow TM$ and  ($\mathbb{R}$-bilinear)  Dorfman bracket $[\cdot , \cdot ].$ 
We often identify $E$ with $E^{*}$ using $\langle \cdot , \cdot \rangle .$ Recall that 
\begin{align} 
\nonumber& [ u, [v,w]] = [[u,v], w] + [v, [u,w]]\\
\nonumber& [ u, f v] = f [u, v] + \pi (u) (f) v\\
\nonumber& \pi (u) \langle v, w\rangle = \langle  [u, v], w\rangle + \langle  v, [u, w]\rangle\\
\label{courant}& \langle [ u, v] + [v, u] , w\rangle = \pi (w) \langle u, v\rangle
\end{align}
for any $u, v,w\in \Gamma (E)$ and $f\in C^{\infty}(M)$.
A generalized connection on $E$ is an $\mathbb{R}$-linear  map 
 $$
  D: \Gamma (E) \rightarrow \Gamma (E^{*}\otimes E),\ u\mapsto  Du
  $$
  which satisfies 
  \begin{align}
 \nonumber&  D_{v} ( fu) = \pi (v)(f) u + f D_{v} u\\  
  \label{con-D}&  \pi (w) \langle u, v\rangle = \langle D_{w} u, v\rangle + \langle u, D_{w} v\rangle
  \end{align}
for any $u, v, w\in \Gamma (E)$ 
and $f\in C^{\infty}(M).$  Its torsion is the  $3$-form
$T^{D}\in \Gamma ( \Lambda^{3} E^{*})$ defined  by
\begin{equation}
T^{D}(u, v, w) = \langle D_{u} v - D_{v} u - [u, v] , w\rangle + \langle D_{w} u, v\rangle , \ \forall u, v, w\in \Gamma (E). 
\end{equation}
We extend $D$  to an $E$-connection on the tensor algebra of $E$, in the usual way.  For a section $u\in \Gamma (E)$ and endomorphism $A\in \Gamma (\mathrm{End}\, E)$, we denote by 
$\mathbf L_{u} A\in \Gamma (\mathrm{End}\, E)$
the Dorfman-Lie derivative of $A$ in the direction of $u$, defined by
$$
(\mathbf L_{u} A)v := [ u, Av] - A [u, v],\ \forall u, v\in \Gamma (E). 
$$ 
A {\cmssl generalized  (admissible) metric} on 
$E$ is a subbundle  $E_{-}\subset E$   such that 
$\langle \cdot , \cdot\rangle\vert_{E_{-}}$ is non-degenerate and 
$\pi \vert_{E_{-}} : E_{-} \rightarrow TM$ is an isomorphism 
(see  \cite{baraglia}).  
Let  $G^{\mathrm{end}}$ be the orthogonal  automorphism  of $E$
defined by $G^{\mathrm{end}}\vert_{E_{\pm}} = \pm \mathrm{Id}$,
where $E_{+} := (E_{-})^{\perp}$ is the orthogonal complement of $E_{-}$ with respect to $\langle \cdot ,  \cdot \rangle .$ 
Then $G(u, v) := \langle G^{\mathrm{end}} u, v\rangle$ is a non-degenerate metric on $E$. We shall often refer to $G$ 
(rather than $E_{-}$) 
as a generalized   metric  on $E$.\ 

A {\cmssl $B_{n}$-generalized almost complex structure}  on $E$ 
is a 
complex isotropic  rank $n$ subbundle $L \subset  E^{\mathbb{C}}$ such that $L \cap \bar{L} =0$.
The orthogonal complement  $U_{\mathbb{C}}$ of  $L \oplus \bar{L}\subset E^{\mathbb{C}}$ is of  rank one, generated by a real section $u_{0} \in \Gamma (E)$ normalized by  $\langle u_{0}, u_{0} \rangle = (-1)^{n}$ and unique up to multiplication by $\pm 1.$ 
Since $\langle u_{0}, u_{0} \rangle$ is constant,
$[u_{0} , u_{0} ]=0$  from the fourth relation in  (\ref{courant}).
Let $\mathcal F\in\Gamma (  \mathrm{End}\,  E)$ be the  endomorphism 
with $i$-eigenbundle $L$, $-i$ eigenbundle $\bar{L}$ and $\mathcal F u_{0} =0.$   
It is $\langle \cdot , \cdot \rangle$-skew-symmetric,  satisfies 
\begin{equation}\label{endo-f}
\mathcal F^{2} = -\mathrm{Id}
+ (-1)^{n}  \langle \cdot , u_{0} \rangle u_{0}
\end{equation}
and is of rank $2n$.
Conversely, any $\langle \cdot , \cdot \rangle$-skew-symmetric endomorphism  $\mathcal F\in\Gamma ( \mathrm{End}\, E)$ of rank $2n$, which satisfies (\ref{endo-f}) 
for a section  $u_{0} \in \Gamma (E)$ with   $\langle u_{0}, u_{0} \rangle = (-1)^{n}$
determines a $B_{n}$-generalized almost complex structure $L$ on $E$, defined as the $i$-eigenbundle of $\mathcal F .$ 
We often refer to $\mathcal F$ (rather than $L$)  as a $B_{n}$-generalized almost complex structure.
We say that $\mathcal F$ is integrable (or, is a {\cmssl $B_{n}$-generalized complex structure}) if 
$\Gamma (L)$ is closed under the Dorfman bracket of $E$. If $\mathcal F$ is integrable, then 
$\mathbf  L_{u_{0}}\mathcal F =0$.\

A  {\cmssl $B_{n}$-generalized almost pseudo-Hermitian   structure}  on $E$  
is a 
generalized metric $G$  together with a $B_{n}$-generalized almost complex structure
 $\mathcal F$ such that $G^{\mathrm{end}} \mathcal F = \mathcal F G^{\mathrm{end}}.$
Then   
$G^{\mathrm{end}} (u_{0}) = (-1)^{n} u_{0}$ and 
$G^{\mathrm{end}} \mathcal F$ is
 also a  $B_{n}$-generalized almost complex structure. Both $\mathcal F$ and $ G^{\mathrm{end}} \mathcal F$ are 
 $G$-skew-symmetric.  We say that $( G, \mathcal F )$ 
  is   {\cmssl  integrable}  (or is a {\cmssl $B_{n}$-generalized pseudo-K\"{a}hler structure}) 
 if $\mathcal F$ and $G^{\mathrm{end}}\mathcal F$ are (integrable) $B_{n}$-generalized complex  structures.
 If $(G, \mathcal F )$ is a $B_{n}$-generalized pseudo-K\"{a}hler structure then  $\mathbf L_{u_{0}} G^{\mathrm{end}} =0.$ 
    
 \section{Integrability of  $B_{n}$-generalized almost complex structures}

Let $(E, \langle \cdot , \cdot \rangle , \pi ,
[\cdot , \cdot ] )$ be an odd exact Courant algebroid over an $n$-dimensional manifold $M$.
The next proposition is  a rewriting of Proposition 4.21  of \cite{rubio} using the Dorfman bracket.
        
\begin{prop}\label{nij-prop}
Let $\mathcal F$ be a $B_{n}$-generalized almost complex structure on $E$  and $U:= \mathrm{Ker}\, \mathcal F .$ 
Then $\mathcal F$ is integrable if and only if
its Nijenhuis tensor 
$$
N_{\mathcal F}: U^{\perp}\times U^{\perp} \rightarrow E
$$
defined by 
\begin{equation}\label{ec1}
N_{\mathcal F} (u, v):= [ \mathcal F u, \mathcal F v] - [u, v] - \mathcal F \left( [ \mathcal F u, v] + [ u, \mathcal F v] \right) ,
\end{equation}
for any $u, v\in \Gamma (U^{\perp})$, 
vanishes. 
\end{prop}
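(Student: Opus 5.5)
Since $U=\mathrm{Ker}\,\mathcal F=\mathrm{span}\{u_0\}$ and $U^{\perp}=L\oplus\bar L$ (complexified), the plan is to decompose $N_{\mathcal F}$ according to the eigenbundles $L$ and $\bar L$ and to compare it directly with the condition $[\Gamma(L),\Gamma(L)]\subset\Gamma(L)$.

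First I check that $N_{\mathcal F}$ is $C^\infty(M)$-bilinear on $U^\perp$, so that it can be evaluated on eigensections. Replacing $v$ by $fv$ produces the extra terms
$$
\pi(\mathcal Fu)(f)\,\mathcal F v-\pi(u)(f)\,v-\mathcal F\bigl(\pi(\mathcal F u)(f)\,v+\pi(u)(f)\,\mathcal F v\bigr),
$$
and these cancel because $\mathcal F^2=-\mathrm{Id}$ on $U^\perp$ by (\ref{endo-f}). In the first slot I use the fourth relation of (\ref{courant}), $[fu,v]=f[u,v]-\pi(v)(f)u+\langle u,v\rangle\,\pi^*df$, where $\pi^*df$ is identified with a section of $E$ via $\langle\cdot,\cdot\rangle$. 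The terms involving $u$ cancel as before. The terms involving $\pi^*df$ combine into
$$
\bigl(\langle\mathcal Fu,\mathcal Fv\rangle-\langle u,v\rangle\bigr)\pi^*df-\mathcal F\,\pi^*df\,\bigl(\langle \mathcal F u,v\rangle+\langle u,\mathcal Fv\rangle\bigr).
$$
By skew-symmetry of $\mathcal F$, the second bracket is $0$, and $\langle\mathcal Fu,\mathcal Fv\rangle=-\langle \mathcal F^2u,v\rangle=\langle u,v\rangle$ for $u\in U^\perp$. So $N_{\mathcal F}$ is indeed tensorial.

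Next I extend complex-linearly and evaluate on the eigenbundles. For $u,v\in\Gamma(L)$ we have $\mathcal Fu=iu$ and $\mathcal Fv=iv$, so
$$
N_{\mathcal F}(u,v)=-2[u,v]-2i\,\mathcal F[u,v]=-2\bigl(\mathrm{Id}+i\mathcal F\bigr)[u,v].
$$
For $u\in L$ and $v\in\bar L$, the first two terms give $[u,v]-[u,v]=0$, and the last term gives $-\mathcal F(i[u,v]-i[u,v])=0$, so $N_{\mathcal F}(u,v)=0$ automatically. The $\bar L\times\bar L$ component is the complex conjugate of the $L\times L$ component. Since $U^\perp\otimes\mathbb C=L\oplus\bar L$, it follows that $N_{\mathcal F}=0$ if and only if $(\mathrm{Id}+i\mathcal F)[u,v]=0$ for all $u,v\in\Gamma(L)$.

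Finally I identify the kernel of $\mathrm{Id}+i\mathcal F$. Decompose $w\in E^{\mathbb C}$ as $w=w_L+w_{\bar L}+c\,u_0$. Then $(\mathrm{Id}+i\mathcal F)w=w_L(1-1)+w_{\bar L}(1+1)+c\,u_0=2w_{\bar L}+cu_0$. Hence the kernel is exactly $L$, and $N_{\mathcal F}=0$ is equivalent to $[\Gamma(L),\Gamma(L)]\subset\Gamma(L)$, which is integrability.

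The main obstacle is the tensoriality step, specifically the anomalous $\langle u,v\rangle\pi^*df$ term of the Dorfman bracket in the first argument. This is also exactly where the restriction to $U^\perp$ is needed: on all of $E$, $\mathcal F^2\neq-\mathrm{Id}$. If tensoriality in the first slot proves awkward, I can avoid it altogether. Bilinearity over functions is not actually needed for the pointwise argument, because $\Gamma(U^\perp\otimes\mathbb C)=\Gamma(L)\oplus\Gamma(\bar L)$ and $N_{\mathcal F}$ is $\mathbb R$-bilinear, so it suffices to evaluate it on sections of $L$ and $\bar L$. The mixed-type cancellation above holds for arbitrary sections.
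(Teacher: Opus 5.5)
Your proof is correct, and it takes a somewhat different route from the paper.

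The paper does not argue from the definition $[\Gamma(L),\Gamma(L)]\subset\Gamma(L)$ directly. It imports Rubio's criterion (\ref{ec0}), which is phrased with the Courant bracket and tests $[L,L]$ against the sections $w-i\mathcal F w$, i.e.\ against $L\oplus U^{\mathbb C}=L^{\perp}$. It then passes to the Dorfman bracket using isotropy of $L$ and discards the $U$-components. Finally it asserts that the resulting condition (\ref{dorf-int}) is equivalent to $N_{\mathcal F}=0$. That last step tacitly splits into real and imaginary parts: the real part is $N_{\mathcal F}(u,v)=0$ and the imaginary part is $N_{\mathcal F}(\mathcal F u,v)=0$.

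You instead apply the complementary projector $\mathrm{Id}+i\mathcal F$, whose kernel is exactly $L$, and use the explicit identities $N_{\mathcal F}(u,v)=-2(\mathrm{Id}+i\mathcal F)[u,v]$ on $L\times L$ and $N_{\mathcal F}=0$ on mixed type. This is self-contained and avoids the Courant/Dorfman conversion. It also makes the equivalence with closure of $\Gamma(L)$ completely transparent.

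You additionally verify that $N_{\mathcal F}$ is $C^\infty(M)$-bilinear on $U^{\perp}$, including the anomalous $\langle u,v\rangle\,\pi^{*}df$ term. The paper tacitly assumes this when it writes $N_{\mathcal F}:U^{\perp}\times U^{\perp}\to E$. As you correctly observe, tensoriality is not needed for the equivalence itself, only for regarding $N_{\mathcal F}$ as a bundle map.

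The paper's route, in turn, is shorter given Rubio's result and ties the statement to the Courant-bracket formulation used there.
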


\begin{proof} 
From  \cite[Section 4.1.4]{rubio},
$\mathcal F$ is integrable if and only if for any $u, v, w\in \Gamma (E)$,
\begin{equation}\label{ec0}
\langle [ \mathcal F u - i \mathcal F^{2} u, \mathcal F v - i \mathcal F^{2} v]_{\mathcal C}, w- i \mathcal F w\rangle=0. 
\end{equation}
Using the fourth relation in (\ref{courant}) and that  $L$ is isotropic, we can replace in the above relation the Courant bracket 
$[\cdot , \cdot ]_{\mathcal C}$ 
(i.e.\  the skew-symmetric part of $[\cdot , \cdot ]$) 
with the Dorfman bracket $[\cdot , \cdot ]$. 
Relation (\ref{ec0}) is obviously satisfied when either $u$ or $v$ is a section of $U$.
Since $\mathcal F^{2} = -\mathrm{Id}$ on $U^{\perp}$, 
we obtain that (\ref{ec0})
is satisfied if and only if for any $u, v\in \Gamma (U^{\perp})$ and $w\in \Gamma (E)$,
\begin{equation}\label{dorf-int}
\langle [ u - i \mathcal F u,  v - i \mathcal F  v], w- i \mathcal F w\rangle=0  
\end{equation}
which is equivalent to (\ref{ec1}).
\end{proof}

\begin{thm}\label{complex-conn} Let  $\mathcal F \in\Gamma ( \mathrm{End }\, E)$  be a $B_{n}$-generalized almost complex structure  on 
$E$, $U:=\mathrm{Ker}\, \mathcal F$ 
and   
$u_{0}\in \Gamma ( U )$ such that $\langle u_{0}, u_{0} \rangle = (-1)^{n}$. 
There is a generalized connection $D$ which preserves $\mathcal F$, i.e.\ $D \mathcal F =0$,
and whose torsion satisfies
\begin{align} 
\nonumber& T^{D} (u, v, w) = \frac{1}{4} \langle N_{\mathcal F}(u, v), w\rangle ,\ \forall u, v, w\in \Gamma (U^{\perp})\\
\label{torsion-c}& T^{D} (u, v, u_{0} )=\frac{1}{2} \langle  ({\mathbf L}_{u_{0}} \mathcal F ) u,  \mathcal Fv\rangle ,\
\forall u, v\in \Gamma (E).
\end{align}
\end{thm}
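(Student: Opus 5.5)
Start from an arbitrary generalized connection $\tilde D$ on $E$; such connections exist, e.g.\ by lifting an ordinary metric connection via a splitting. First project it onto an $\mathcal F$-preserving connection. Since $\mathcal F$ is skew with $\mathcal F^2=-\mathrm{Id}$ on $U^\perp$ and $\mathcal F u_0=0$, I would set
$$D^0_u v := \tfrac12\bigl(\tilde D_u v - \mathcal F\tilde D_u(\mathcal F v)\bigr)\ \text{on } \Gamma(U^\perp),\qquad D^0_u u_0:=0 .$$
Then extend by the splitting $E=U^\perp\oplus U$. The reason $D^0$ works: the $U^\perp$-component of $\tilde D$ on $U^\perp$ is a metric connection on $U^\perp$, and the displayed average commutes with the complex structure $\mathcal F|_{U^\perp}$ while remaining metric. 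Also, $u_0$ is parallel and of constant norm. Hence $D^0$ is a generalized connection with $D^0\mathcal F=0$ and $D^0u_0=0$.

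Any other $\mathcal F$-preserving connection is $D=D^0+A$, where $A\in\Gamma(E^*\otimes\mathfrak{so}(E))$ takes values in the subalgebra commuting with $\mathcal F$. Equivalently, $A_w$ preserves $U$ and $U^\perp$ and is complex linear on $U^\perp$. Since $U$ has rank one and $A_w$ is skew, this forces $A_w u_0=0$. The torsion changes by $T^{D}=T^{D^0}+3\,\mathrm{Alt}(A)$, where $A(w,u,v)=\langle A_wu,v\rangle$. So the task is purely algebraic: find $A$ in the space $E^*\otimes\mathfrak u(U^\perp)$ whose totally skew part matches prescribed values of $T^{D}$. The prescription consists of the components of $T^D$ in $\Lambda^3U^\perp$ and in $\Lambda^2U^\perp\wedge u_0$. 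Components involving $u_0$ twice vanish automatically.

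I would decompose $\Lambda^3 (U^\perp)^{\mathbb C}$ into types $(3,0)+(2,1)+(1,2)+(0,3)$, using $U^\perp\otimes\mathbb C=L\oplus\bar L$. The standard fact, the same as in almost complex geometry and in the even case of \cite{moscow}, is that $\mathrm{Alt}$ maps $(U^\perp)^*\otimes\mathfrak u(U^\perp)$ onto the $(2,1)+(1,2)$ part. The $(3,0)+(0,3)$ part is $\mathcal F$-invariant, namely the part on which $T(\mathcal F\cdot,\mathcal F\cdot,\cdot)=-T$, and the $A$-term cannot change it. So I would check the following. First, the $(3,0)+(0,3)$ part of $T^{D^0}|_{U^\perp}$ equals that of $\frac14\langle N_{\mathcal F}(u,v),w\rangle$. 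Second, the form $\frac14\langle N_{\mathcal F}(u,v),w\rangle$ itself is of pure type $(3,0)+(0,3)$. The latter follows from the formula, since $N_{\mathcal F}$ is complex antilinear in each argument, and it is totally skew by \eqref{courant} and isotropy of $L$ (compare \eqref{dorf-int}). The former is a direct computation. Evaluate $T^{D^0}$ on $u,v,w\in\Gamma(L)$, where $D^0\mathcal F=0$ keeps $D^0_uv\in L$. This gives $T^{D^0}(u,v,w)=-\langle[u,v],w\rangle$, since $\langle D_w u,v\rangle=0$ by isotropy. Meanwhile $\langle N_{\mathcal F}(u,v),w\rangle=-4\langle [u,v],w\rangle$ for $u,v,w\in L$. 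The two agree.

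Next handle the mixed components $T^D(u,v,u_0)$ with $u,v\in U^\perp$. Here $\mathrm{Alt}(A)(u,v,u_0)$ only involves $\langle A_{u_0}u,v\rangle$, since $A_u u_0=0$. The term $A_{u_0}$ is an arbitrary section of $\mathfrak u(U^\perp)$, i.e.\ it adjusts the $(1,1)$ part of the $2$-form $T^D(\cdot,\cdot,u_0)$ freely. I need to check that the $(2,0)+(0,2)$ part of $T^{D^0}(\cdot,\cdot,u_0)$ equals that of $\frac12\langle(\mathbf L_{u_0}\mathcal F)u,\mathcal Fv\rangle$. I also need that the latter is of type $(2,0)+(0,2)$ and skew. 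For $u,v\in L$, compute $T^{D^0}(u,v,u_0)=\langle D^0_uv-D^0_vu-[u,v],u_0\rangle+\langle D^0_{u_0}u,v\rangle=-\langle[u,v],u_0\rangle$. The two $D^0$-terms in the bracket drop out because $D^0$ preserves $L\perp u_0$, and $\langle D^0_{u_0}u,v\rangle=0$ by isotropy. Then $\langle[u,v],u_0\rangle=-\langle v,[u,u_0]\rangle$. By the fourth relation in \eqref{courant} this equals $\langle v,[u_0,u]\rangle$, since $\langle u,u_0\rangle=0$. Comparing with $\frac12\langle(\mathbf L_{u_0}\mathcal F)u,\mathcal Fv\rangle=\frac12\langle [u_0,iu]-\mathcal F[u_0,u], iv\rangle$ on $L$ should give equality. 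The skew-symmetry and type claims follow since $\mathbf L_{u_0}\mathcal F$ anticommutes with $\mathcal F$ on $U^\perp$ modulo $U$; this uses $\mathbf L_{u_0}$ of \eqref{endo-f} and $[u_0,u_0]=0$. The case where an argument equals $u_0$ is trivial on both sides.

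Finally, I choose $A$ with $A_w u_0=0$ and $A_w\in\mathfrak u(U^\perp)$. I take its $U^\perp$-part to solve the $(2,1)+(1,2)$ equation on $\Lambda^3U^\perp$ via a right inverse of $\mathrm{Alt}$. I take $A_{u_0}$ to fix the $(1,1)$ part of the mixed component. These two choices are independent. The main obstacle is the bookkeeping of signs and the factors $\frac14$, $\frac12$, i.e.\ verifying that the $\mathcal F$-invariant parts of $T^{D^0}$ match the prescribed tensors. This is where I expect most care to be needed.
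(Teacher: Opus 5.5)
Your argument is correct, but it takes a different route from the paper.

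\textbf{The paper's route.} The paper fixes one torsion-free $D$ with $Du_0=0$ (Lemma~\ref{pr-add}) and writes down the explicit connection $\tilde D_u=D_u-\frac14\{A_u,\mathcal F\}-\frac12\mathcal F D_u\mathcal F$. It then computes the torsion of this connection: the mixed part via Lemma~\ref{pr-add-1}, and the part on $\Lambda^3U^\perp$ via the identity \eqref{rel-n-c}.

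\textbf{Your route.} You give an intrinsic-torsion argument instead. The $(3,0)+(0,3)$ part on $\Lambda^3U^\perp$ and the $(2,0)+(0,2)$ part of $T(\cdot,\cdot,u_0)$ are the same for every $\mathcal F$-preserving connection; for the mixed part this is exactly what Lemma~\ref{pr-add-1} says. You identify both parts by a one-line evaluation on $L$. You then set the remaining $(2,1)+(1,2)$ and $(1,1)$ parts freely, using the surjectivity of the cyclic sum on $E^*\otimes\mathfrak u(U^\perp)$.

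The checks you left open do hold. On $L$, both sides of the mixed relation equal $-\langle[u_0,u],v\rangle$. Also, $\mathbf L_{u_0}$ annihilates $\langle\cdot,u_0\rangle u_0$, by the third Courant relation and $[u_0,u_0]=0$. Hence $\mathbf L_{u_0}\mathcal F$ is $\langle\cdot,\cdot\rangle$-skew and anticommutes with $\mathcal F$ exactly. This makes $\frac12\langle(\mathbf L_{u_0}\mathcal F)u,\mathcal Fv\rangle$ a skew form of type $(2,0)+(0,2)$.

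\textbf{Comparison.} Your route avoids the long computation behind \eqref{rel-n-c}. It can start from any generalized connection, not just a torsion-free one. It also shows precisely which torsion components obstruct integrability. The paper's explicit formula pays off later. In Section~4 it is applied verbatim to a torsion-free $G$-compatible $D$, giving a connection that preserves both $G$ and $\mathcal F$. Your argument would have to be redone for $\mathfrak u(E_+)\oplus\mathfrak u(E_-)$. There the image of the cyclic sum is smaller, so more of the torsion is intrinsic.

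\textbf{A slip to fix.} As displayed, $D^0_uv=\frac12(\tilde D_uv-\mathcal F\tilde D_u\mathcal Fv)$ keeps the $U$-component $\frac12(\tilde D_uv)_U$. So for $v\in\Gamma(U^\perp)$ you get, in general, $\langle D^0_uv,u_0\rangle+\langle v,D^0_uu_0\rangle\neq 0$ and $(D^0_u\mathcal F)v=\frac12(\tilde D_u\mathcal Fv)_U\neq 0$. Replace $\tilde D_uv$ by its orthogonal projection onto $U^\perp$, as your justification intends. Alternatively, start from a connection with $\tilde Du_0=0$. In that case your $D^0$ is exactly the paper's $D^{(1)}=D-\frac12\mathcal F D\mathcal F$.
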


We divide the proof of the above theorem into several lemmas. Let $\mathcal F$ be a $B_{n}$-generalized
almost complex structure on $E$, as in Theorem \ref{complex-conn}.

\begin{lem}\label{pr-add} 
i) There is a torsion-free generalized
connection  $D$   on $E$ such that  $Du_{0}=0.$\ 

ii) Let $D$ be a generalized connection on $E$ such that $Du_{0} =0.$  Then
\begin{align}
\nonumber& \langle N_{\mathcal F}(u, v),w\rangle =\\
\nonumber&  - T^{D} (\mathcal F u,\mathcal F v, w) + T^{D} (u, v,w) 
- T^{D}(\mathcal F u, v, \mathcal F w) - T^{D} (u, \mathcal F v, \mathcal F w)\\
\nonumber& + \langle (D_{\mathcal F u} \mathcal F )v, w\rangle -  \langle (D_{\mathcal F v} \mathcal F )u, w\rangle 
+\langle \mathcal F (D_{v} \mathcal F )u, w\rangle \\
\label{rel-n-c} & - \langle \mathcal F (D_{u} \mathcal F)v, w\rangle - \langle \mathcal F (  D_{w} \mathcal F)u,v\rangle
+\langle (D_{\mathcal F w} \mathcal F )u,v\rangle ,
\end{align}
for any $u, v\in \Gamma (U^{\perp})$  and $w\in \Gamma (E).$
\end{lem}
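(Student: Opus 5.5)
For part i), I would first take an arbitrary generalized connection $D^{0}$ with $D^{0}u_{0}=0$. Such a connection can be built locally by choosing a local orthonormal-type frame $\{u_{0}, e_{1},\dots ,e_{2n}\}$ of $E$ adapted to the orthogonal splitting $E = U\oplus U^{\perp}$. On each piece, I take a metric connection lifted via the anchor, for instance $D^{0}_{v} = \nabla_{\pi(v)}$, where $\nabla$ is an ordinary metric connection on $U^{\perp}$ and the trivial connection on $U=\mathbb{R}u_{0}$. These local choices are then glued with a partition of unity; this is legitimate because metric generalized connections form an affine space and the condition $D u_{0}=0$ is affine. Any other generalized connection is $D = D^{0} + A$ with $A\in\Gamma(E^{*}\otimes \mathfrak{so}(E))$, and its torsion is $T^{D} = T^{D^{0}} + \mathrm{Alt}(A)$, where $\mathrm{Alt}(A)(u,v,w) = A_{u}(v,w)+A_{v}(w,u)+A_{w}(u,v)$ up to the standard normalization.

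It then suffices to find $A$ with $A_{x}u_{0}=0$ for all $x$ and $\mathrm{Alt}(A) = -T^{D^{0}}$. I would take $A := -\tfrac13 T^{D^{0}}$ and correct it by terms involving $u_{0}$. Write $\tau := T^{D^{0}}$ and split it according to $E = U\oplus U^{\perp}$ as $\tau = \tau_{0} + u_{0}^{\flat}\wedge\beta$, where $\tau_{0}\in\Lambda^{3}(U^{\perp})^{*}$ and $\beta\in\Lambda^{2}(U^{\perp})^{*}$; no term with two $u_{0}$ factors survives, by skew-symmetry. For the $\tau_{0}$ part, $A_{x}(y,z) = -\tfrac13\tau_{0}(x,y,z)$ already kills $u_{0}$ in its last two slots. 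For the mixed part, I would take $A_{x}(y,z) = -(-1)^{n}\langle x,u_{0}\rangle\beta(y,z)$. This vanishes when $y$ or $z$ is $u_{0}$, and its alternation is $-u_{0}^{\flat}\wedge\beta$, with the sign fixed by the normalization $\langle u_{0},u_{0}\rangle=(-1)^{n}$. Hence $D^{0}+A$ is torsion-free and satisfies $Du_{0}=0$.

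For part ii), I would expand every torsion term on the right of (\ref{rel-n-c}) via the definition
$$T^{D}(a,b,c) = \langle D_{a}b - D_{b}a - [a,b], c\rangle + \langle D_{c}a, b\rangle .$$
For example, $T^{D}(\mathcal F u,\mathcal F v,w)$ contributes $-\langle[\mathcal F u,\mathcal F v],w\rangle$, $T^{D}(u,v,w)$ contributes $-\langle [u,v],w\rangle$, and the other two contribute $-\langle[\mathcal F u,v],\mathcal F w\rangle = \langle \mathcal F[\mathcal F u,v],w\rangle$ and the analogous term with $[u,\mathcal F v]$, using skew-symmetry of $\mathcal F$. With the signs in the statement, these bracket terms reassemble exactly into $\langle N_{\mathcal F}(u,v),w\rangle$.

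The remaining terms are the covariant-derivative terms. I would rewrite each of them using $D_{x}(\mathcal F y) = (D_{x}\mathcal F)y + \mathcal F D_{x}y$, together with $\mathcal F^{2}=-\mathrm{Id}$ on $U^{\perp}$. I would also need that $D_{x}$ preserves $U^{\perp}$, which holds because $Du_{0}=0$ and $D$ is metric; this is exactly where the hypothesis enters. Finally I would use the skew-symmetry of $D_{x}\mathcal F$. The terms of the form $\mathcal F D \cdot$ should cancel in pairs, leaving precisely the six $D\mathcal F$ terms.

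The main obstacle is this bookkeeping in part ii). The delicate points are the terms where $w$ is not in $U^{\perp}$, and the places where $\mathcal F^{2}$ acts on $D_{x}y$, which requires $D_{x}y\in U^{\perp}$. I would handle them by treating the cases $w=u_{0}$ and $w\in U^{\perp}$ separately. In the case $w=u_{0}$, I would note that $\mathcal F w=0$, and that $(D_{w}\mathcal F)$ maps $U^{\perp}$ to $U^{\perp}$ and kills $u_{0}$.
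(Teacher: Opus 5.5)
Your part ii) is the paper's own argument, and the bookkeeping closes up as you predict: all $\langle \mathcal F D_x y,\cdot\rangle$ and $\langle D_x y,\cdot\rangle$ terms cancel in pairs. The case split on $w$ is unnecessary. The reason is that $\mathcal F^{2}$ only ever acts on some $D_x y$ with $y\in\Gamma(U^\perp)$, or is paired with $v\in\Gamma(U^\perp)$.

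Part i) is correct but runs in the opposite direction to the paper.
\begin{itemize}
\item The paper starts from a torsion-free generalized connection, whose existence it takes from the literature. It then adds a correction with vanishing cyclic sum, namely $\eta_u=(-1)^{n+1}u_0\wedge D_u u_0$ for $u\in\Gamma(U^\perp)$ and $\eta_{u_0}=-2(Du_0)^{\mathrm{sk}}$. This makes $u_0$ parallel without creating torsion.
\item You start from a connection with $u_0$ parallel. You then remove the torsion by a correction with values in $\Lambda^2(U^\perp)^*$, the stabilizer of $u_0$. In effect you show that the cyclic-sum map $\partial\colon E^*\otimes\Lambda^2(U^\perp)^*\to\Lambda^3E^*$ is onto.
\end{itemize}
Your route is self-contained, since it reproves existence of torsion-free generalized connections, and it fits the prolongation viewpoint of Section 5.

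The paper's route yields an explicit formula in terms of $Du_0$, and it is reused in Lemma \ref{st1}. There one starts from a Levi-Civita connection, and the same $\eta$ turns out to be $G$-skew because $\mathbf L_{u_0}G^{\mathrm{end}}=0$. Your method would adapt there too, but only after an extra check. For a connection preserving $G$ and $u_0$, you would need the torsion component $T(u_0,y,z)=-\langle [u_0,y],z\rangle$, with $y\in\Gamma(E_+)$ and $z\in\Gamma(E_-)$, to vanish. This is because $\partial$, applied to $E^*$ tensored with the stabilizer of $(G,u_0)$, never produces that component. Its vanishing is again $\mathbf L_{u_0}G^{\mathrm{end}}=0$.

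One sign needs fixing. With $\tau=\tau_0+u_0^\flat\wedge\beta$ as you define it, the cyclic sum of $A_x(y,z)=-(-1)^n\langle x,u_0\rangle\beta(y,z)$ is $-(-1)^n u_0^\flat\wedge\beta$, which is wrong for odd $n$. Take instead $A_x(y,z)=-\langle x,u_0\rangle\beta(y,z)$, or equivalently $-(-1)^n\langle x,u_0\rangle\tau(u_0,y,z)$. The normalization $\langle u_0,u_0\rangle=(-1)^n$ enters only through the identity $\beta=(-1)^n\iota_{u_0}\tau$. Note also that the torsion changes by exactly the cyclic sum of $A$, with no extra normalization. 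That is what makes $-\tfrac13\tau_0$ the right choice for the other component.
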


\begin{proof} i) Let $D$ be a  torsion-free generalized connection. The generalized connection $D^{(1)}_{u} := D_{u}
+\eta_{u}$ where $\eta_{u} := (-1)^{n+1} u_{0}\wedge D_{u} u_{0}$ for $u\in \Gamma ( U^{\perp})$
and $\eta_{u_{0}}:= - 2 (Du_{0})^{\mathrm{sk}}$ where 
$(Du_{0})^{\mathrm{sk}}$ is the skew-symmetric part  of  the\linebreak[4] endomorphism 
$Du_{0}\in \Gamma (\mathrm{End}\, E)$,
is also torsion-free and moreover preserves $u_{0}.$ Here we have 
identified $\Lambda^2E$ with $\mathfrak{so}(E)$ by means of the scalar product, using the following convention $(u\wedge v)(w) = \langle u,w\rangle v-\langle v,w\rangle u$.

 ii)  The claim follows 
 by applying the formula 
 $$
\langle [ u, v], w\rangle = \langle D_{u}v - D_{v} u ,w\rangle +\langle D_{w} u, v\rangle - T^{D}(u, v, w)
$$
 for various terms in  $\langle N_{\mathcal F}(u, v), w\rangle$ (see relation (\ref{ec1}))
 and using   (\ref{endo-f}), the skew-symmetry of $\mathcal F$  and that $D$ preserves $U^{\perp}$ 
(which follows from $Du_{0} =0$).
\end{proof}

For a generalized connection $D$ on $E$ we define 
\begin{equation}\label{tors-anti}
 (T^{D})^{\mathcal F , -}(u, v, u_{0}  ): =\frac{1}{2} \left( T^{D} ( \mathcal F u, \mathcal F v, u_{0}) -
T^{D} (u, v, u_{0}) \right).
\end{equation}

\begin{lem}\label{pr-add-1}
Any generalized connection $D$ which preserves $\mathcal F$  
preserves also $u_{0}$. Its torsion satisfies 
\begin{equation}\label{tors-j-anti}
 (T^{D})^{\mathcal F , -}(u, v, u_{0}  )=-\frac{1}{2}   \langle  ( {\mathbf L}_{u_{0}} \mathcal F ) u,  \mathcal F v\rangle ,\
\forall u, v\in \Gamma (E).
\end{equation} 
\end{lem}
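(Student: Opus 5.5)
The plan has two steps. First, a connection preserving $\mathcal F$ also preserves $u_{0}$, using (\ref{endo-f}). Second, $T^{D}(\cdot,\cdot,u_{0})$ is encoded by a single skew endomorphism, and the anti-invariant part (\ref{tors-anti}) is then compared with $\mathbf L_{u_{0}}\mathcal F$.

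\emph{Step 1: $Du_{0}=0$.} Since $D\mathcal F=0$, also $D(\mathcal F^{2})=0$. By (\ref{endo-f}) this means $D$ preserves the endomorphism $P:=\langle\cdot,u_{0}\rangle u_{0}$. Writing out $(D_{w}P)v=\langle v,D_{w}u_{0}\rangle u_{0}+\langle v,u_{0}\rangle D_{w}u_{0}=0$ and taking $v=u_{0}$ gives
$$(-1)^{n}D_{w}u_{0}+\langle u_{0},D_{w}u_{0}\rangle u_{0}=0 .$$
Since $\langle u_{0},u_{0}\rangle$ is constant, the metric compatibility in (\ref{con-D}) gives $\langle u_{0},D_{w}u_{0}\rangle=0$. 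Hence $D_{w}u_{0}=0$.

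\emph{Step 2: the endomorphism $S$.} Define $S(a):=D_{u_{0}}a-[u_{0},a]$. By the Leibniz rules for $D$ and for the Dorfman bracket, the derivative terms $\pi(u_{0})(f)a$ cancel, so $S$ is $C^{\infty}(M)$-linear. Comparing (\ref{con-D}) with the third relation of (\ref{courant}) shows that $S$ is $\langle\cdot,\cdot\rangle$-skew. Moreover $Su_{0}=D_{u_{0}}u_{0}-[u_{0},u_{0}]=0$, using Step 1 and $[u_{0},u_{0}]=0$.

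Since $T^{D}$ is a 3-form, $T^{D}(a,b,u_{0})=T^{D}(u_{0},a,b)$. By definition, and using $Du_{0}=0$ to kill $D_{a}u_{0}$ and $D_{b}u_{0}$,
$$T^{D}(u_{0},a,b)=\langle D_{u_{0}}a-[u_{0},a],b\rangle=\langle Sa,b\rangle .$$
On the other hand,
$$(\mathbf L_{u_{0}}\mathcal F)v=[u_{0},\mathcal Fv]-\mathcal F[u_{0},v]=(D_{u_{0}}\mathcal F)v-(S\mathcal F-\mathcal F S)v .$$
Because $D\mathcal F=0$, this gives $\mathbf L_{u_{0}}\mathcal F=-[S,\mathcal F]$.

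\emph{Step 3: the comparison.} By skew-symmetry of $\mathcal F$,
$$2(T^{D})^{\mathcal F,-}(u,v,u_{0})=\langle S\mathcal Fu,\mathcal Fv\rangle-\langle Su,v\rangle=-\langle\mathcal FS\mathcal Fu,v\rangle-\langle Su,v\rangle .$$
Similarly,
$$\langle(\mathbf L_{u_{0}}\mathcal F)u,\mathcal Fv\rangle=-\langle S\mathcal Fu,\mathcal Fv\rangle+\langle\mathcal FSu,\mathcal Fv\rangle=\langle\mathcal FS\mathcal Fu,v\rangle-\langle\mathcal F^{2}Su,v\rangle .$$
By (\ref{endo-f}), $-\langle\mathcal F^{2}Su,v\rangle=\langle Su,v\rangle-(-1)^{n}\langle Su,u_{0}\rangle\langle u_{0},v\rangle$. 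The last term vanishes because $\langle Su,u_{0}\rangle=-\langle u,Su_{0}\rangle=0$. Comparing the two expressions gives (\ref{tors-j-anti}).

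The only delicate points are the bookkeeping of the $u_{0}$-component of $\mathcal F^{2}$ and the verification that $S$ is tensorial, skew, and kills $u_{0}$. I expect the latter to be the main place where care is needed.
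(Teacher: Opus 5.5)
Your proof is correct and follows essentially the paper's route. Both arguments get $Du_{0}=0$ from $D\mathcal F=0$ and the constancy of $\langle u_{0},u_{0}\rangle$, reduce to the identity $T^{D}(u,v,u_{0})=\langle D_{u_{0}}u-[u_{0},u],v\rangle$, and compare with $\mathbf L_{u_{0}}\mathcal F$ using $D\mathcal F=0$ and (\ref{endo-f}). The only difference is organizational: you obtain the identity for all $u,v\in\Gamma(E)$ at once from the cyclic symmetry of the $3$-form $T^{D}$, packaged as the skew tensor $S$ with $\mathbf L_{u_{0}}\mathcal F=-[S,\mathcal F]$, whereas the paper treats $u$ or $v\in\Gamma(U)$ separately and derives the identity on $U^{\perp}$ from the third and fourth Courant relations.
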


\begin{proof}
From $D\mathcal F =0$ we obtain that  $D u_{0} = \lambda \otimes u_{0}$ for some $\lambda \in \Gamma (E^*)$, since $u_0$ spans the kernel of $\mathcal F$. Taking the scalar product of this relation with $u_{0}$ and using that $\langle u_{0}, u_{0}\rangle
= (-1)^{n}$ we obtain that $\lambda =0$, i.e.\ $Du_{0} =0.$
When $u$ or $v$ is a section of $U$, both sides of  (\ref{tors-j-anti}) vanish. Assume now that $u, v\in \Gamma (U^{\perp}).$ 
Since $D$ preserves $U^{\perp}$, 
\begin{align}
\nonumber T^{D} (u, v, u_{0})&  =  - \langle [u, v], u_{0} \rangle + \langle D_{u_{0}} u, v\rangle\\
\nonumber& = \langle v, [u, u_{0}]\rangle  +  \langle D_{u_{0}} u, v\rangle\\
\label{t-d-u0}&  = \langle D_{u_{0}} u - [u_{0}, u], v\rangle ,
\end{align}
where we used the definition of $T^{D}$ and the third, respectively  
fourth relation in (\ref{courant}).
Relation  (\ref{tors-j-anti})  with $u, v\in \Gamma (U^{\perp})$ follows from (\ref{t-d-u0}) applied to both terms in the right hand side of 
(\ref{tors-anti}).
\end{proof}

With the above  preliminary lemmas, we  now construct a generalized connection 
required by Theorem
\ref{complex-conn}. 
Let  $D$ be a torsion-free generalized  connection 
such that $Du_{0} =0$ (which exists by  Lemma
\ref{pr-add} i)). For any $u\in \Gamma (E)$, let  $A_{u}\in \Gamma ( \mathrm{End}\, (U^{\perp}))$ defined by 
\begin{equation}
\langle A_{u}v, w\rangle  =\frac{1}{2} \left( \langle (D_{v} \mathcal F)u, w\rangle + \langle (D_{w} \mathcal F )u, v
\rangle \right),\ \forall v, w\in \Gamma (U^{\perp})
\end{equation}
and extend it to a (symmetric)  endomorphism of $E$ (also denoted by $A_{u}$) which acts trivially on $U$.
We denote by
$$
\{ A_{u}, \mathcal F \} = A_{u}\circ \mathcal F + \mathcal F \circ A_{u}
$$
the anti-commutator of $A_{u}$ and $\mathcal F .$
We will prove that the generalized connection $\tilde{D}$ defined in the next lemma satisfies the conditions required by Theorem \ref{complex-conn}. 
The next lemma is the analogue of Lemma~26 of \cite{moscow}. 

\begin{lem}\label{l1} For any $u, v\in \Gamma (E)$, define $\tilde{D}_{u}v\in \Gamma (E)$ by  
\begin{equation}
\langle \tilde{D}_{u} v, w\rangle = \langle D_{u} v, w\rangle -\frac{1}{4} \langle \{ A_{u}, \mathcal F \} v, w\rangle 
-\frac{1}{2} \langle \mathcal F (D_{u}\mathcal F )v, w\rangle  ,\ \forall w\in \Gamma (E).
\end{equation}
Then $\tilde{D}$ is a generalized connection which preserves $\mathcal F .$
\end{lem}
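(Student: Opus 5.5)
The proof has two parts. First I check that the correction terms are tensorial and take values in $\mathfrak{so}(E)$, so that $\tilde D$ is a generalized connection. Then I compute $\tilde D\mathcal F$ directly and show that it vanishes.

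\emph{$\tilde D$ is a generalized connection.} Write $\tilde D_u = D_u - \frac14\{A_u,\mathcal F\} - \frac12 \mathcal F (D_u\mathcal F)$. Since $D_v\mathcal F$ is a tensor in $v$, the endomorphisms $A_u$, $\{A_u,\mathcal F\}$ and $\mathcal F(D_u\mathcal F)$ are $C^\infty(M)$-linear in both $u$ and their argument. The Leibniz rule in (\ref{con-D}) is therefore inherited from $D$. For metric compatibility I need both correction terms to be $\langle\cdot,\cdot\rangle$-skew.

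\begin{itemize}
\item The anticommutator $\{A_u,\mathcal F\}$ is skew, because $A_u$ is symmetric and $\mathcal F$ is skew: $(A_u\mathcal F+\mathcal F A_u)^{t} = -\mathcal F A_u - A_u\mathcal F$.
\item For $\mathcal F(D_u\mathcal F)$, note that $D_u\mathcal F$ is skew, since $D$ is metric and $\mathcal F$ is skew.
\item Since $Du_0=0$, the term $(-1)^n\langle\cdot,u_0\rangle u_0$ in (\ref{endo-f}) is $D$-parallel. Differentiating (\ref{endo-f}) therefore gives $(D_u\mathcal F)\mathcal F + \mathcal F(D_u\mathcal F)=0$.
\item Hence $(\mathcal F D_u\mathcal F)^{t} = (D_u\mathcal F)\mathcal F = -\mathcal F D_u\mathcal F$, which is skew.
\end{itemize}

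\emph{$\tilde D$ preserves $\mathcal F$.} We have
$$
\tilde D_u\mathcal F = D_u\mathcal F - \tfrac14\big[\{A_u,\mathcal F\},\mathcal F\big] - \tfrac12\big[\mathcal F D_u\mathcal F,\mathcal F\big].
$$

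For the middle term, expanding gives $[\{A_u,\mathcal F\},\mathcal F] = A_u\mathcal F^2 - \mathcal F^2 A_u$. By construction $A_u$ preserves $U^\perp$ and kills $U$. Also $\mathcal F^2=-\mathrm{Id}$ on $U^\perp$ and $\mathcal F^2=0$ on $U$. So $A_u$ commutes with $\mathcal F^2$ and the middle term vanishes.

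For the last term, the anticommutation relation above gives
$$
[\mathcal F D_u\mathcal F,\mathcal F] = \mathcal F(D_u\mathcal F)\mathcal F - \mathcal F^2 D_u\mathcal F = -2\mathcal F^2 D_u\mathcal F.
$$
I then show $\mathcal F^2 D_u\mathcal F = -D_u\mathcal F$:
\begin{itemize}
\item $(D_u\mathcal F)u_0 = D_u(\mathcal F u_0) - \mathcal F D_u u_0 = 0$, using $\mathcal F u_0=0$ and $Du_0=0$.
\item Since $D_u\mathcal F$ is skew and annihilates $u_0$, its image lies in $U^\perp$.
\item On $U^\perp$ we have $\mathcal F^2=-\mathrm{Id}$.
\end{itemize}
Thus $-\frac12[\mathcal F D_u\mathcal F,\mathcal F] = -D_u\mathcal F$, which cancels the first term, and $\tilde D\mathcal F=0$.

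The only delicate point is the repeated use of $Du_0=0$ from Lemma~\ref{pr-add}\,i). It is needed to differentiate (\ref{endo-f}) cleanly and to see that $D_u\mathcal F$ annihilates $U$ and takes values in $U^\perp$. Once these two facts are in place, the rest is routine algebra with commutators.
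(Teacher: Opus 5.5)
Your proof is correct and follows the same route as the paper. You get skew-symmetry of both correction terms from the anticommutation of $D_u\mathcal F$ and $\mathcal F$ (a consequence of $Du_0=0$), then use that $\{A_u,\mathcal F\}$ commutes with $\mathcal F$ and that $D_u-\frac12\mathcal F D_u\mathcal F$ preserves $\mathcal F$. You also make explicit what the paper leaves implicit: $D_u\mathcal F$ kills $u_0$ and takes values in $U^\perp$, so $\mathcal F^2 D_u\mathcal F=-D_u\mathcal F$.
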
  

 \begin{proof}
From  (\ref{endo-f}) and $D u_{0} =0$, we obtain that
$D_{u} \mathcal F$ and $\mathcal F$ anti-commute, for any $u\in \Gamma (E).$ 
The endomorphisms $\mathcal F$ and  $D_{u}\mathcal F$ are skew-symmetric, 
while $A_{u}$ is symmetric.  We obtain
that   $\{ A_{u}, \mathcal F \}$ and 
$\mathcal F D_{u}\mathcal F $ are both skew-symmetric and hence $\tilde{D}$ is a generalized connection.
Moreover,  $\{ A_{u}, \mathcal F \}$ commutes with $\mathcal F$ while
$$
D^{(1)}_{u} : = D_{u}-\frac{1}{2} \mathcal F D_{u} \mathcal F
$$
is a generalized connection which preserves $\mathcal F .$
Hence  $\tilde{D}$ preserves $\mathcal F .$
\end{proof}

The next lemma concludes the proof of Theorem \ref{complex-conn}.

\begin{lem}\label{l2} The torsion of  $\tilde{D}$ satisfies relations
(\ref{torsion-c}).
\end{lem}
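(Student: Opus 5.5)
The torsion of $\tilde D$ will be computed by writing $\tilde D_u = D_u + \Phi_u$, with $\Phi_u := -\frac14\{A_u,\mathcal F\} - \frac12 \mathcal F(D_u\mathcal F)$ skew-symmetric. Since $D$ is torsion-free, the standard formula for the change of torsion gives
\begin{equation*}
T^{\tilde D}(u,v,w) = \langle \Phi_u v, w\rangle - \langle \Phi_v u, w\rangle + \langle \Phi_w u, v\rangle ,
\end{equation*}
which is in fact the full skew-symmetrization of $(u,v,w)\mapsto\langle\Phi_u v,w\rangle$, using that each $\Phi_u$ is skew-symmetric. The two relations in (\ref{torsion-c}) will be checked separately: first the $U^\perp$-part, then the part with $u_0$ in the last slot.

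For the relation involving $u_0$, no computation with $\Phi$ should be needed. By Lemma \ref{l1}, $\tilde D\mathcal F=0$, so Lemma \ref{pr-add-1} applies to $\tilde D$ and gives $\tilde D u_0=0$ and the anti-invariant part (\ref{tors-j-anti}). It therefore suffices to show that the $\mathcal F$-invariant part $\frac12(T^{\tilde D}(\mathcal Fu,\mathcal Fv,u_0)+T^{\tilde D}(u,v,u_0))$ equals $\frac12\langle(\mathbf L_{u_0}\mathcal F)u,\mathcal Fv\rangle$ as well; indeed, invariant minus anti-invariant part recovers $T^{\tilde D}(u,v,u_0)$, giving the stated value. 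By (\ref{t-d-u0}), $T^{\tilde D}(u,v,u_0)=\langle \tilde D_{u_0}u-[u_0,u],v\rangle$ for $u,v\in\Gamma(U^\perp)$. The map $\tilde D_{u_0}-[u_0,\cdot]$ is tensorial, since $\pi(u_0)$ appears in both Leibniz rules. Both $\tilde D_{u_0}$ and $D_{u_0}$ restrict to skew endomorphisms of $U^\perp$. The invariant part involves $\tilde D_{u_0}$ only through its $\mathcal F$-commuting component, which vanishes against itself after symmetrization: $\langle \tilde D_{u_0}\mathcal Fu,\mathcal Fv\rangle + \langle\tilde D_{u_0}u,v\rangle = \langle \mathcal F\tilde D_{u_0}u,\mathcal Fv\rangle+\langle\tilde D_{u_0}u,v\rangle = 0$, using $\mathcal F^2=-\mathrm{Id}$ on $U^\perp$ and skew-symmetry of $\mathcal F$. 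What remains is $-\langle[u_0,\mathcal Fu],\mathcal Fv\rangle-\langle[u_0,u],v\rangle$. Rewriting $[u_0,\mathcal Fu]=(\mathbf L_{u_0}\mathcal F)u+\mathcal F[u_0,u]$ and again using $\mathcal F^2=-\mathrm{Id}$ on $U^\perp$ (here one must check that the $u_0$-component of $[u_0,u]$ does not contribute, which follows from $\langle [u_0,u],u_0\rangle = -\langle u,[u_0,u_0]\rangle=0$) leaves $-\langle(\mathbf L_{u_0}\mathcal F)u,\mathcal Fv\rangle$ up to the factor $\frac12$. Since I expect the signs to need care, I will recheck this against (\ref{tors-j-anti}) so that the sum and difference combine to the claimed $\frac12\langle(\mathbf L_{u_0}\mathcal F)u,\mathcal Fv\rangle$.

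For the relation on $U^\perp$, I will take $u,v,w\in\Gamma(U^\perp)$ and use Lemma \ref{pr-add} ii) for $D$, which is torsion-free with $Du_0=0$. The torsion terms drop out, so $\langle N_{\mathcal F}(u,v),w\rangle$ equals the six $D\mathcal F$ terms in (\ref{rel-n-c}). Next I expand $T^{\tilde D}(u,v,w)$ as the skew-symmetrization of $-\frac14\langle\{A_u,\mathcal F\}v,w\rangle-\frac12\langle\mathcal F(D_u\mathcal F)v,w\rangle$, substitute the definition of $A_u$ in terms of $D\mathcal F$, and use that $D_x\mathcal F$ anticommutes with $\mathcal F$ and is skew. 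The aim is to express everything in the same six terms, $\langle(D_{\mathcal Fx}\mathcal F)y,z\rangle$ and $\langle\mathcal F(D_x\mathcal F)y,z\rangle$ over permutations, and compare with $\frac14$ of (\ref{rel-n-c}). The main obstacle is that the $A$-terms produce expressions like $\langle(D_{\mathcal Fv}\mathcal F)u,w\rangle$ with $\mathcal F$ in the derivative slot. These must be matched using the symmetric structure of $A_u$. If a direct match fails, I would first decompose $D\mathcal F$ into its $(3,0)+(0,3)$ and $(2,1)+(1,2)$ type components relative to $L\oplus\bar L$, and compare type by type, as in the even-rank argument of \cite{moscow} (Lemma 27 there), which this computation mirrors.
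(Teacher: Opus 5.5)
\textbf{The gap is in your argument for the relation with $u_0$; that part does not work as proposed.}

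\emph{The key identity is false.} You claim $\langle\mathcal F\tilde D_{u_0}u,\mathcal Fv\rangle+\langle\tilde D_{u_0}u,v\rangle=0$. But $\mathcal F$ is skew with $\mathcal F^2=-\mathrm{Id}$ on $U^\perp$, so it is an isometry there: $\langle\mathcal Fx,\mathcal Fy\rangle=-\langle x,\mathcal F^2y\rangle=\langle x,y\rangle$. Hence the sum equals $2\langle\tilde D_{u_0}u,v\rangle$. An $\mathcal F$-commuting endomorphism cancels in the \emph{anti}-invariant part, not the invariant one. That is exactly why Lemma \ref{pr-add-1} holds for every $\mathcal F$-preserving connection.

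\emph{Your target is also wrong.} $T^{\tilde D}(u,v,u_0)$ equals the invariant part minus the anti-invariant part, and the latter is $-\frac12\langle(\mathbf L_{u_0}\mathcal F)u,\mathcal Fv\rangle$. So the claim is equivalent to the invariant part being $0$, not $\frac12\langle(\mathbf L_{u_0}\mathcal F)u,\mathcal Fv\rangle$.

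\emph{More fundamentally, the vanishing of the invariant part does not follow from $\tilde D\mathcal F=0$ and bracket identities.} Consider $\tilde D+\langle u_0,\cdot\rangle\otimes\mathcal F$. It is still a generalized connection preserving $\mathcal F$, with the same torsion on $U^\perp$. Yet its torsion $T(u,v,u_0)$ differs from that of $\tilde D$ by the $\mathcal F$-invariant term $(-1)^n\langle\mathcal Fu,v\rangle$. So ``no computation with $\Phi$'' cannot succeed; the second relation is a property of the specific $\tilde D$.

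\textbf{What is needed is the explicit form of $\Phi_{u_0}$, as in the paper.} Since $Du_0=0$, you have $(D_x\mathcal F)u_0=0$ for all $x$. Hence $A_{u_0}=0$, and $\Phi_xu_0=0$ for all $x$ (recall $A_x$ vanishes on $U$). Therefore $\Phi_{u_0}=-\frac12\mathcal F(D_{u_0}\mathcal F)$. Because $D$ is torsion-free, your own change-of-torsion formula then gives
$T^{\tilde D}(u,v,u_0)=\langle\Phi_{u_0}u,v\rangle=-\frac12\langle\mathcal F(D_{u_0}\mathcal F)u,v\rangle$.
The endomorphism $\mathcal F(D_{u_0}\mathcal F)$ anticommutes with $\mathcal F$, so this expression is $\mathcal F$-anti-invariant. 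Its invariant part therefore vanishes, and $T^{\tilde D}(\cdot,\cdot,u_0)=-(T^{\tilde D})^{\mathcal F,-}(\cdot,\cdot,u_0)$. Only at this point does Lemma \ref{pr-add-1}, applied to $\tilde D$, yield $\frac12\langle(\mathbf L_{u_0}\mathcal F)u,\mathcal Fv\rangle$.

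\textbf{Your plan for the $U^\perp$-relation is sound and is essentially the paper's route.} The paper merely passes through $D^{(1)}=D-\frac12\mathcal F D\mathcal F$. The matching you worry about is direct and needs no type decomposition. Put $\gamma(x,y,z)=\langle(D_x\mathcal F)y,z\rangle$, and note $\gamma(x,\mathcal Fy,z)=\gamma(x,y,\mathcal Fz)$. Then:
\begin{itemize}
\item the $\mathcal F D\mathcal F$-term contributes $\frac12\sum_{(u,v,w)}\gamma(u,v,\mathcal Fw)$;
\item the $A$-terms contribute $\frac14\sum_{(u,v,w)}\bigl(\gamma(\mathcal Fu,v,w)-\gamma(u,v,\mathcal Fw)\bigr)$;
\item the total is exactly $\frac14$ of the right-hand side of (\ref{rel-n-c}) with $T^D=0$.
\end{itemize}
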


\begin{proof} The generalized connections $\tilde{D}$ and $D$ are related by
$\tilde{D} = D +\eta$
where
\begin{equation}\label{def-eta}
\eta_{u} (v, w):= -\frac{1}{4} \langle \{ A_{u}, \mathcal F \} v, w\rangle 
-\frac{1}{2} \langle \mathcal F (D_{u}\mathcal F )v, w\rangle  ,
\end{equation}
for any $u, v, w\in \Gamma (E)$. Note that 
\begin{equation}\label{eta-0}
\eta_{u}(u_{0}, w) =0,\   \eta_{u_{0}} (v, w) = -\frac{1}{2} \langle \mathcal F (D_{u_{0}} \mathcal F )v, w\rangle .
\end{equation}
Since $D$ is torsion-free,
\begin{equation}
T^{\tilde{D}} (v, w, u_{0}) =\eta_{v} (w, u_{0} )+  \eta_{u_{0}} (v, w) + \eta_{w} (u_{0}, v) =  -\frac{1}{2} \langle \mathcal F (D_{u_{0}} \mathcal F )v,w\rangle ,
\end{equation}
which implies that $T^{\tilde{D}} = - (T^{\tilde{D}} )^{\mathcal F , -}.$ 
The second relation  in 
(\ref{torsion-c}) 
follows from Lemmas \ref{pr-add-1} and \ref{l1}. 
The first relation in  (\ref{torsion-c})  follows from 
an argument  analogous to  Lemmas 27, 28 and 29 of \cite{moscow}.
For this reason, we only preserve the main steps:  we compute first
\begin{equation}\label{d-1}
T^{D^{(1)} } (u, v, w) =\frac{1}{2} \sum_{( u, v, w)} \gamma (u, v, \mathcal F w),
\end{equation}
for any $u, v, w\in E$, 
where  $\gamma (u, v, w) := \langle (D_{u} \mathcal F  )v, w\rangle$ and 
$\sum_{( u,v,w)}$ denotes  sum over cyclic permutations. 
Using (\ref{d-1}) and $\tilde{D}_{u} = D^{(1)}_{u} -\frac{1}{4} \{ A_{u}, \mathcal F \} $
we obtain 
\begin{equation}\label{dt}
T^{\tilde{D}} (u, v,w) =\frac{1}{4}  \sum_{(u, v,w)} \left( \gamma (\mathcal F u, v,w) +\gamma (u, v, \mathcal F w) \right) ,
\end{equation}
for any $u, v, w\in U^{\perp}.$ On the other hand, from 
 (\ref{rel-n-c}), the right hand side of (\ref{dt}) equals $\frac{1}{4} \langle N_{\mathcal F} (u, v), w\rangle$,    since $D$ is torsion-free and $Du_{0} =0.$
\end{proof}

We arrive at the main result of this section.

\begin{thm}\label{complex-conn-cor} A $B_{n}$-generalized almost complex structure $\mathcal F$ on $E$ is integrable 
if and only if there is a torsion-free generalized connection $D$ which preserves $\mathcal F .$ 
\end{thm}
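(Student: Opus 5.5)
We prove the two implications separately. Theorem \ref{complex-conn} does most of the work for the ``only if'' direction, and Lemma \ref{pr-add} ii) handles the ``if'' direction.

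\emph{Integrable $\Rightarrow$ existence.} Suppose $\mathcal F$ is integrable. By Proposition \ref{nij-prop}, $N_{\mathcal F}=0$ on $U^{\perp}\times U^{\perp}$. As recalled in the preliminaries, integrability also gives $\mathbf L_{u_0}\mathcal F=0$. Theorem \ref{complex-conn} provides a generalized connection $\tilde D$ with $\tilde D\mathcal F=0$ whose torsion satisfies (\ref{torsion-c}). Since both right-hand sides of (\ref{torsion-c}) vanish, we get $T^{\tilde D}(u,v,w)=0$ for $u,v,w\in\Gamma(U^\perp)$ and $T^{\tilde D}(u,v,u_0)=0$ for all $u,v$. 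Now $E=U^{\perp}\oplus \mathbb{R}u_0$ pointwise, $T^{\tilde D}$ is a 3-form, and any component with two entries along $u_0$ vanishes by skew-symmetry. Hence $T^{\tilde D}=0$, and $\tilde D$ is the required connection.

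\emph{Existence $\Rightarrow$ integrable.} Let $D$ be torsion-free with $D\mathcal F=0$. By Lemma \ref{pr-add-1}, $D$ also preserves $u_0$, i.e.\ $Du_0=0$. So Lemma \ref{pr-add} ii) applies to $D$. In formula (\ref{rel-n-c}) all torsion terms vanish because $T^D=0$, and all terms involving $D\mathcal F$ vanish because $D\mathcal F=0$. Therefore $\langle N_{\mathcal F}(u,v),w\rangle=0$ for all $u,v\in\Gamma(U^\perp)$ and $w\in\Gamma(E)$. Nondegeneracy of $\langle\cdot,\cdot\rangle$ gives $N_{\mathcal F}=0$, and Proposition \ref{nij-prop} yields integrability.

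\emph{Main obstacle.} The only point needing care is completing the vanishing of $T^{\tilde D}$ in the first direction. This requires the fact, quoted in the preliminaries, that integrability implies $\mathbf L_{u_0}\mathcal F=0$. If we wanted to derive it directly instead, we would compute $\langle[u_0,\cdot],\cdot\rangle$ on sections of $L$ using the integrability condition, isotropy of $L$, and $[u_0,u_0]=0$.
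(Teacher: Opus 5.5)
Your proof is correct and follows the paper's argument. The forward direction is exactly the paper's: Theorem \ref{complex-conn} together with $N_{\mathcal F}=0$ and $\mathbf L_{u_0}\mathcal F=0$, with your decomposition $E=U^{\perp}\oplus\mathbb{R}u_0$ making explicit why the torsion vanishes. In the converse, the only difference is how you handle the $u_0$-component of $N_{\mathcal F}$: you read it off from (\ref{rel-n-c}) with $w=u_0$, which is valid since that formula holds for all $w\in\Gamma(E)$, whereas the paper computes $\langle N_{\mathcal F}(u,v),u_0\rangle=\langle(\mathbf L_{u_0}\mathcal F)u,\mathcal F v\rangle$ and kills it via Lemma \ref{pr-add-1}. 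Your version is slightly more economical.
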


\begin{proof} Assume that $\mathcal F$ is a $B_{n}$-generalized complex structure  and let $D$ be a generalized connection provided by
Theorem \ref{complex-conn}. Then $D\mathcal F =0$ and, from the integrability of $\mathcal F$,   
$D$ is torsion-free
(since $N_{\mathcal F} =0$ and  ${\mathbf L}_{u_{0}} \mathcal F =0$).
Conversely, assume that $\mathcal F$ is a $B_{n}$-generalized almost complex structure and $D$ a  torsion-free
generalized connection such that $D\mathcal F =0$. 
For any
$u, v\in \Gamma (U^{\perp})$, 
\begin{align}
\nonumber \langle N_{\mathcal F} (u, v), u_{0}\rangle &=\langle [ \mathcal F u, \mathcal F v] - [u, v],u_{0}\rangle
= \langle [u_{0}, \mathcal F u] , \mathcal F v \rangle -\langle [u_{0}, u], v\rangle \\
\label{n-f-l}& = \langle  (\mathbf L_{u_{0}} \mathcal F ) u,  \mathcal F v\rangle =0,
\end{align} 
where in the last equality we used  Lemma \ref{pr-add-1} and $T^{D}=0$.  From (\ref{n-f-l}) and  
$\langle N_{\mathcal F}(u,v), w\rangle =0$ for any $u, v,w\in \Gamma (U^{\perp})$ (as $T^{D} =0$, see Theorem \ref{complex-conn}),   we obtain $N_{\mathcal F} =0$.
\end{proof}

\section{Integrability of $B_{n}$-generalized almost pseudo-Hermitian structures}
  
 In this section we prove:

\begin{thm} \label{GK:thm}A $B_{n}$-generalized almost pseudo-Hermitian structure  $(G, \mathcal F )$  on $E$ is integrable  if and only if
there is a torsion-free generalized connection $D$ which preserves $(G, \mathcal F)$, i.e.\   $DG =0$ and $D\mathcal F =0.$ 
\end{thm}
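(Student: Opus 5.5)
The easy direction is the converse. Suppose $D$ is a torsion-free generalized connection with $DG=0$ and $D\mathcal F=0$. Since $G^{\mathrm{end}}$ is determined by $G$ and $\langle\cdot,\cdot\rangle$, and $D$ preserves the scalar product, $DG=0$ gives $DG^{\mathrm{end}}=0$. Hence $D(G^{\mathrm{end}}\mathcal F)=0$. Now $\mathcal F$ and $G^{\mathrm{end}}\mathcal F$ are both $B_n$-generalized almost complex structures, with the same kernel $U$ spanned by $u_0$, and $D$ is torsion-free and preserves both. By Theorem \ref{complex-conn-cor}, applied once to $\mathcal F$ and once to $G^{\mathrm{end}}\mathcal F$, both are integrable. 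So $(G,\mathcal F)$ is a $B_n$-generalized pseudo-K\"ahler structure.

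For the direct implication, assume $(G,\mathcal F)$ is integrable. I will build a connection preserving $E_+$, $E_-$ and $\mathcal F$, mimicking the even-rank argument of \cite{moscow} for generalized K\"ahler structures. Since $G^{\mathrm{end}}$ commutes with $\mathcal F$ and $G^{\mathrm{end}}u_0=(-1)^n u_0$, the line $U$ sits in $E_+$ if $n$ is even and in $E_-$ if $n$ is odd. Both $E_\pm$ are $\mathcal F$-invariant. On the summand not containing $u_0$, $\mathcal F$ is a genuine complex structure; on the summand containing $u_0$, it is a $B$-type structure.

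The construction proceeds in three steps.

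\textbf{Step 1.} Start from a torsion-free generalized connection $D$ with $DG=0$ (a Levi-Civita generalized connection, which exists on $E$). Modify it, as in Lemma \ref{pr-add} i), so that also $Du_0=0$. The correction $\eta_{u}=(-1)^{n+1}u_0\wedge D_u u_0$ should preserve $E_\pm$ and torsion-freeness, because $D_uu_0$ lies in the same summand as $u_0$.

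\textbf{Step 2.} Apply the averaging of Lemma \ref{l1}, $\tilde D_u=D_u-\frac14\{A_u,\mathcal F\}-\frac12\mathcal F(D_u\mathcal F)$. Since $D_u\mathcal F$ commutes with $G^{\mathrm{end}}$, this correction preserves $E_\pm$, so $\tilde D G=0$ and $\tilde D\mathcal F=0$.

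\textbf{Step 3.} Compute the torsion of $\tilde D$ by Theorem \ref{complex-conn}. In general it will not vanish from the integrability of $\mathcal F$ alone, because the $A_u$ term is not adapted to the splitting. The remedy, as in \cite{moscow}, is to replace $A_u$ and the correction terms by their versions built from the Nijenhuis-type data of the two structures $\mathcal F_\pm=\mathcal F|_{E_\pm}$.

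Equivalently, one works with the pair of structures $\mathcal F$ and $G^{\mathrm{end}}\mathcal F$. Their $\pm i$-eigenbundles decompose $E_+^{\mathbb C}$ and $E_-^{\mathbb C}$, and the integrability of both is equivalent to the vanishing of the mixed components $\langle N(u,v),w\rangle$ for $u,v,w$ in the various eigenbundles $L_\pm,\bar L_\pm$, together with $\mathbf L_{u_0}\mathcal F=0$ and $\mathbf L_{u_0}G^{\mathrm{end}}=0$. I expect the torsion components of $\tilde D$ to be expressible through these quantities and the projections of $D\mathcal F$.

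The main obstacle is Step 3: showing that the torsion of the $E_\pm$- and $\mathcal F$-preserving connection can be killed entirely. The components of type $E_+\otimes E_+\otimes E_-$ and $E_-\otimes E_-\otimes E_+$ are tensorial, given by the Dorfman bracket, and unaffected by any choice of connection. So integrability must be used to show they are compatible with $\mathcal F$. Concretely, for $u,v\in E_\pm$ and $w\in E_\mp$, the mixed torsion is $\langle D_u v-D_v u-[u,v],w\rangle$, and I will check that it is $\mathcal F$-invariant using $N_{\mathcal F}=N_{G^{\mathrm{end}}\mathcal F}=0$.

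The pure components on $E_+$ and on $E_-$ are then removed as in Theorem \ref{complex-conn}, now applied separately on each summand. On the summand containing $u_0$, the $u_0$-components are handled by $\mathbf L_{u_0}\mathcal F=0$ through Lemma \ref{pr-add-1}. The remaining freedom in the pure parts lets me adjust by a tensor in $\Gamma(E_\pm^*\otimes\mathfrak{so}(E_\pm))$ commuting with $\mathcal F$. I would verify that the resulting $(2,1)+(1,2)$ torsion can be absorbed, since the $(3,0)$ part vanishes by integrability.
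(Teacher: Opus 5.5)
Your easy direction is correct and is the paper's argument. The direct implication, however, has a genuine gap. The one place where integrability of $G^{\mathrm{end}}\mathcal F$ is needed is waved through in Step 2.

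You argue that $\tilde D$ preserves $E_\pm$ ``since $D_u\mathcal F$ commutes with $G^{\mathrm{end}}$''. This handles $\mathcal F(D_u\mathcal F)$, but not $\{A_u,\mathcal F\}$. In $\langle A_uv,w\rangle=\frac12(\langle (D_v\mathcal F)u,w\rangle+\langle (D_w\mathcal F)u,v\rangle)$ the section $u$ is an argument, while $v,w$ are the directions of differentiation. For $v\in E_+$, $w\in E_-$, commutation of $D_x\mathcal F$ with $G^{\mathrm{end}}$ kills only one of the two terms, depending on the summand containing $u$. The surviving term has the form $\langle (D_x\mathcal F)u,y\rangle$ with $x$ in one summand and $u,y$ in the other, and nothing you have said forces it to vanish.

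Proving that it vanishes is the heart of the paper's proof. Using $T^D=0$ and $DG=0$, for $v\in E_+$ and $u,w\in E_-$ one gets $\langle (D_v\mathcal F)u,w\rangle=\langle[v,\mathcal Fu]-\mathcal F[v,u],w\rangle$, and similarly with $E_\pm$ swapped. This is then shown to vanish by letting $u,v,w$ range over $E^{\mathbb C}_\pm\cap L$, $E^{\mathbb C}_\pm\cap\bar L$ and $\mathrm{span}(u_0)$. The inputs are bracket-closure and isotropy of both $L$ and the $i$-eigenbundle of $G^{\mathrm{end}}\mathcal F$, together with $\mathbf L_{u_0}\mathcal F=0$.

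Were Step 2 right as stated, Lemma \ref{l2} would give a torsion-free connection preserving $(G,\mathcal F)$ from integrability of $\mathcal F$ alone. The easy direction would then make $G^{\mathrm{end}}\mathcal F$ integrable whenever $\mathcal F$ is. That fails, e.g.\ for $\mathcal F$ induced by a symplectic form and $G$ by a non-integrable compatible almost complex structure.

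A smaller omission of the same kind is in Step 1. The observation that $D_uu_0$ lies in the summand of $u_0$ only covers $\eta_u$ for $u\in\Gamma(U^\perp)$. The other piece, $\eta_{u_0}=-2(Du_0)^{\mathrm{sk}}$, is needed for torsion-freeness. It preserves $E_\pm$ only because $\mathbf L_{u_0}G^{\mathrm{end}}=0$: torsion-freeness gives $\langle D_yu_0,x\rangle=-\langle[u_0,y],x\rangle$ for $y$ in the summand not containing $u_0$ and $x$ in the other.

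Step 3, on the other hand, attacks a non-problem. By Lemma \ref{l2} (Theorem \ref{complex-conn}), once $D$ is torsion-free with $Du_0=0$, the torsion of $\tilde D$ is $\frac14\langle N_{\mathcal F}(u,v),w\rangle$ on $U^\perp$ and $\frac12\langle(\mathbf L_{u_0}\mathcal F)u,\mathcal Fv\rangle$ in the $u_0$-direction. Both vanish by integrability of $\mathcal F$. So no modification of $A_u$ is needed, and the whole difficulty is metric compatibility.

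Moreover, the one concrete claim in Step 3 is false. For an $E_\pm$-preserving $D$ and $u,v\in E_+$, $w\in E_-$, one has $T^D(u,v,w)=-\langle[u,v],w\rangle+\langle D_wu,v\rangle$. The term you dropped depends on the connection, so the mixed torsion is not ``unaffected by any choice of connection''. What is true is that torsion-freeness forces the mixed part $D_{E_\mp}|_{E_\pm}$. Checking that this forced part commutes with $\mathcal F$ is exactly the bracket identity above, which your plan leaves at ``I will check''.

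That verification, like the rest of Step 3 (``I expect'', ``I would verify''), is never carried out, so the direct implication is not proved.
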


One statement  is obvious: if there is a torsion-free generalized connection $D$ such that 
$DG =0$ and $D \mathcal F=0$
then 
$\mathcal F_{1} =\mathcal F$ and 
$\mathcal F_{2}= G^{\mathrm{end}} \mathcal F$ are integrable, from Theorem  \ref{complex-conn-cor}.
For the converse, let $(G, \mathcal F)$ be a $B_{n}$-generalized pseudo-K\"{a}hler structure and $u_{0}\in\Gamma (U)
=\Gamma ( \mathrm{Ker}\, \mathcal F )$ such that
$\langle u_{0}, u_{0}\rangle = (-1)^{n}.$ 

\begin{lem}\label{st1} There is a torsion-free generalized connection $D$ such that $DG =0$ and $Du_{0} =0.$
\end{lem}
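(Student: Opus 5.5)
Start from a torsion-free generalized connection $D^{0}$ with $D^{0}G=0$, i.e.\ one preserving the orthogonal splitting $E=E_{+}\oplus E_{-}$. Such Levi-Civita generalized connections exist on any Courant algebroid with a generalized metric (\cite{garcia}). Concretely, take any generalized connection $D'$ preserving $E_{\pm}$ (for instance the projections of an arbitrary one), and correct it by a $3$-form built from $T^{D'}$ to remove the torsion. The correction only needs to be adjusted so that it preserves $E_\pm$; the usual argument shows that the components of $T^{D'}$ of mixed type $(E_\pm,E_\pm,E_\mp)$ can be absorbed by terms of type $\Gamma(E_\mp^*\otimes \mathfrak{so}(E_\pm))$. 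In the standard construction the mixed terms of the type $(\pm,\pm,\mp)$ are forced (``Bismut-type'' part), while the pure-type parts are removed by one third of the torsion. I will simply invoke this existence result.

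Next, correct $D^{0}$ so that it also preserves $u_{0}$. Since $(G,\mathcal F)$ is integrable, $G^{\mathrm{end}}u_{0}=(-1)^{n}u_{0}$, so $u_{0}$ lies in $E_{\epsilon}$ with $\epsilon=(-1)^{n}$. Because $D^{0}$ preserves $E_{\epsilon}$ and $\langle u_{0},u_{0}\rangle$ is constant, $D^{0}_{u}u_{0}\in E_{\epsilon}\cap u_{0}^{\perp}$. Mimic Lemma \ref{pr-add} i) and set $D=D^{0}+\eta$ with
\[
\eta_{u}:=(-1)^{n+1}u_{0}\wedge D^{0}_{u}u_{0}\ \ (u\in\Gamma(U^{\perp})),\qquad \eta_{u_{0}}:=-2(D^{0}u_{0})^{\mathrm{sk}} .
\]
Since $u_{0}$ and $D^{0}_{u}u_{0}$ both lie in $E_{\epsilon}$, each $\eta_{u}$ for $u\in U^\perp$ belongs to $\mathfrak{so}(E_{\epsilon})$ and preserves $E_\pm$. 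One checks $D_{u}u_{0}=0$ for $u\in U^\perp$, exactly as in Lemma \ref{pr-add} i).

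The main obstacle is $\eta_{u_{0}}$ and the torsion. We need $(D^{0}u_{0})^{\mathrm{sk}}$, viewed as a $2$-form $v,w\mapsto\frac12(\langle D^{0}_vu_0,w\rangle-\langle D^{0}_wu_0,v\rangle)$, to preserve $E_\pm$, i.e.\ to have no mixed $E_+\times E_-$ component. Here I will use $\mathbf L_{u_{0}}G^{\mathrm{end}}=0$, which holds by integrability. Torsion-freeness of $D^0$ gives
\[
\langle D^{0}_{u_{0}}v-[u_{0},v],w\rangle=\langle D^0_vu_0,w\rangle-\langle D^0_wu_0,v\rangle .
\]
Take $v\in E_+$ and $w\in E_-$. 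The left side vanishes, because $D^{0}$ and $[u_{0},\cdot]$ both preserve $E_{+}$ (the latter by $\mathbf L_{u_0}G^{\mathrm{end}}=0$). Hence the mixed part of the skew form vanishes, so $\eta_{u_0}$ preserves $E_\pm$. The torsion of $D$ is the skew-symmetrization of $\eta$. I will check that it vanishes by evaluating on the triples $(u,v,w)$ with all of $u,v,w$ in $U^{\perp}$, and on the triples $(u,v,u_0)$, as in Lemma \ref{pr-add} i). The terms $\eta_u(v,w)$ with $u,v,w\in U^\perp$ vanish since $u_0\perp v,w$. For the triples $(u,v,u_0)$ the contributions $\eta_u(v,u_0)+\eta_v(u_0,u)$ cancel against $\eta_{u_0}(u,v)=-2(D^0u_0)^{\mathrm{sk}}(u,v)$ after the normalization $\langle u_0,u_0\rangle=(-1)^n$. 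Finally, since $D_{u_0}u_0=-2(D^0u_0)^{\mathrm{sk}}u_0+D^0_{u_0}u_0$, I will verify that this also vanishes using $\langle D^0_vu_0,u_0\rangle=0$ and $D^{0}_{u_{0}}u_{0}=[u_{0},u_{0}]+(\text{skew terms})=0$ via torsion-freeness.
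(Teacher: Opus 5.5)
Your proof is correct and follows the paper's route. You take a Levi-Civita connection and correct it by the $\eta$ of Lemma~\ref{pr-add}~i). The $G$-skewness of $\eta_u$ for $u\in U^{\perp}$ comes from $u_0, D^0_u u_0\in E_{(-1)^n}$. The $G$-skewness of $\eta_{u_0}$ comes from torsion-freeness plus $\mathbf L_{u_0}G^{\mathrm{end}}=0$, and your restriction to mixed pairs $v\in E_+$, $w\in E_-$ is a tidier version of the paper's verification of (\ref{eki}).

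One small slip: torsion-freeness does not force $D^0_{u_0}u_0=0$. You can add $\mathrm{sk}(\beta\otimes\beta\otimes\gamma)$, with $\beta=\langle u_0,\cdot\rangle$ and $\gamma$ vanishing on $u_0$ and on the other eigenbundle of $G^{\mathrm{end}}$, and still have a Levi-Civita connection. You do not need that claim, though: $\langle D^0_w u_0,u_0\rangle=0$ already gives $2(D^0u_0)^{\mathrm{sk}}u_0=D^0_{u_0}u_0$, hence $D_{u_0}u_0=0$.
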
 

\begin{proof}
Start with a  Levi-Civita generalized connection $D$ of $G$,
i.e.\ a  torsion-free generalized connection which preserves $G$ (see e.g.\  \cite{garcia} for its existence). 
Let 
$D^{(1)}_{u} = D_{u} +\eta_{u}$
be the generalized connection from  the proof of Lemma~\ref{pr-add}~i),
where, we recall, 
$\eta_{u} = (-1)^{n+1} u_{0} \wedge D_{u} u_{0}$ for $u\in \Gamma ( U^{\perp})$ and
$\eta_{u_{0}} = - 2 (Du_{0})^{\mathrm{sk}}$.  Then
$D^{(1)}$ is torsion-free and 
$D^{(1)}u_{0} =0.$ We claim that $D^{(1)} G =0.$ This reduces to showing that $\eta_{u}$ is $G$-skew-symmetric,
for any $u\in\Gamma ( E).$ 
For $u := u_{0}$ this 
is equivalent to 
\begin{equation}\label{eki}
\langle D_{G^{\mathrm{end}}v} u_{0}, w\rangle + \langle D_{G^{\mathrm{end}}w} u_{0}, v\rangle
=  \langle D_{v} u_{0},G^{\mathrm{end}} w\rangle + \langle D_{w} u_{0}, G^{\mathrm{end}}v\rangle,
\end{equation}
for any $u, v\in \Gamma (E)$. Using that $T^{D} =0$,  we write
$$
\langle D_{G^{\mathrm{end}}v} u_{0}, w\rangle =\langle D_{u_{0}} (G^{\mathrm{end}} v), w\rangle
- \langle D_{w} (G^{\mathrm{end}} v), u_{0} \rangle +\langle [ G^{\mathrm{end}} v, u_{0} ], w\rangle
 $$
and similarly for  $\langle D_{G^{\mathrm{end}}w} u_{0}, v\rangle .$
Using $D G^{\mathrm{end}} =0$ and the fourth relation in \eqref{courant},  we
 obtain that  (\ref{eki}) is equivalent to 
\begin{align}
\nonumber& G( D_{u_{0}} w, v) -\langle [ u_{0}, G^{\mathrm{end}} w], v\rangle + G(D_{u_{0}} v, w)\\
\label{eki-1}& - G( D_{w} v, u_{0}) + \langle [ G^{\mathrm{end}} v, u_{0} ], w\rangle - G( D_{w} u_{0}, v)=0
\end{align}
or to 
\begin{equation}\label{eki-2}
\pi ( u_{0} ) G(v, w) -\langle [ u_{0}, G^{\mathrm{end}}w], v\rangle = \pi (w) G(u_{0}, v) -\langle 
[ G^{\mathrm{end}} v, u_{0} ], w\rangle ,
\end{equation}
where we used $D G^{\mathrm{end}} =0$ again.
But 
\begin{align*}
\langle [ G^{\mathrm{end}}v, u_{0} ], w\rangle 
\nonumber& = - \langle [ u_{0}, G^{\mathrm{end}} v], w\rangle + \pi (w) G(u_{0}, v)
\end{align*}
and the right-hand side of (\ref{eki-2})  equals $\langle [ u_{0},  G^{\mathrm{end}} v], w\rangle .$ 
Relation (\ref{eki-2}) follows now from the third relation in  (\ref{courant}), 
${\mathbf L}_{u_{0}} G^{\mathrm{end}} =0$ and the $\langle\cdot , \cdot \rangle$-symmetry of $G^{\mathrm{end}}.$ 
This proves the $G$-skew-symmetry of $\eta_{u_{0}}.$  
The $G$-skew-symmetry   of $\eta_{u}$ for $u\in \Gamma ( U^{\perp})$ follows by writing  
$$
\eta_{u} = \frac{(-1)^{n+1}}{2} \left( u_{0} \wedge D_{u} u_{0} + G^{\mathrm{end}}u_{0} \wedge G^{\mathrm{end}} D_{u} u_{0}\right)
$$
and noticing that  $u\wedge v + G^{\mathrm{end}}u\wedge G^{\mathrm{end}}v$ is $G$-skew-symmetric for any $u, v.$
\end{proof}

The next lemma  concludes the proof of Theorem \ref{GK:thm}. 
 
\begin{lem} Let $D$ be a generalized connection like in Lemma \ref{st1}. Then the generalized connection
$\tilde{D}$ defined in Lemma \ref{l1} starting from $D$ satisfies $\tilde{D}G =0$,  $\tilde{D}\mathcal F =0$
and $T^{\tilde{D}} =0.$ 
\end{lem}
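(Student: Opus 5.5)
Three things have to be checked for $\tilde D$: that it preserves $\mathcal F$, that it preserves $G$, and that it is torsion-free.

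\emph{Preservation of $\mathcal F$.} This is Lemma \ref{l1} verbatim, since $D$ is a generalized connection with $Du_0=0$.

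\emph{Preservation of $G$.} Since $\tilde D = D+\eta$ and $DG=0$, it suffices to show that $\eta_u$ commutes with $G^{\mathrm{end}}$ for every $u\in\Gamma(E)$. We know $D G^{\mathrm{end}}=0$ and $G^{\mathrm{end}}\mathcal F=\mathcal F G^{\mathrm{end}}$. Hence $D_u\mathcal F$ commutes with $G^{\mathrm{end}}$, and therefore so does $\mathcal F(D_u\mathcal F)$. For $A_u$, write
$$\langle A_u G^{\mathrm{end}}v,w\rangle=\tfrac12\big(\langle (D_{G^{\mathrm{end}}v}\mathcal F)u,w\rangle+\langle (D_w\mathcal F)u,G^{\mathrm{end}}v\rangle\big).$$
The second term matches the corresponding term of $\langle G^{\mathrm{end}}A_u v,w\rangle=\langle A_u v,G^{\mathrm{end}}w\rangle$. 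The first term does not match directly, because $D_{G^{\mathrm{end}}v}$ differentiates in a different direction. So I do not expect $A_u$ itself to commute with $G^{\mathrm{end}}$. Instead I will exploit the integrability of $\mathcal F_2=G^{\mathrm{end}}\mathcal F$.

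By Theorem \ref{complex-conn} applied to $\mathcal F$ and to $\mathcal F_2$ (note $G^{\mathrm{end}}u_0=\pm u_0$, so both share the same kernel $U$), together with formula (\ref{dt}), the cyclic sums
$$\textstyle\sum \big(\gamma(\mathcal F u,v,w)+\gamma(u,v,\mathcal F w)\big)$$
vanish on $U^\perp$, as do the analogous sums with $\mathcal F$ replaced by $\mathcal F_2$. Here $\gamma_2(u,v,w)=\langle (D_u\mathcal F_2)v,w\rangle=\gamma(u,v,G^{\mathrm{end}}w)$, using $DG=0$.

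Following the analogous step in \cite{moscow} for generalized Kähler structures, I will split $U^\perp=(U^\perp\cap E_+)\oplus(U^\perp\cap E_-)$, which is possible since $u_0$ is an eigenvector of $G^{\mathrm{end}}$. Adding and subtracting the two vanishing identities separates the components of $\gamma$ according to the $\pm$ types of the arguments. Because $D_u\mathcal F$ preserves $E_\pm$, the entries $\gamma(u,v,w)$ with $v,w$ of different type already vanish. The goal is to deduce that $\gamma(u,v,w)=0$ whenever $u$ and $v$ lie in opposite $E_\pm$. That gives $\langle (D_{G^{\mathrm{end}}v}\mathcal F)u,w\rangle=\pm\langle (D_v\mathcal F)u,w\rangle$ with the sign matching the type of $u,w$, which is exactly what makes $A_u$ commute with $G^{\mathrm{end}}$. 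The $u_0$-direction causes no trouble because $A_u$ acts trivially on $U$ and $\eta_u(u_0,\cdot)=0$. This type bookkeeping is the main obstacle.

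\emph{Torsion-freeness.} Here I will use Lemma \ref{l2}. By the first relation of (\ref{torsion-c}), $T^{\tilde D}$ vanishes on $U^\perp$ because $N_{\mathcal F}=0$ by Proposition \ref{nij-prop}. By the second relation, the components involving $u_0$ vanish because $\mathbf L_{u_0}\mathcal F=0$. Since $T^{\tilde D}$ is a $3$-form and $U$ has rank one, $T^{\tilde D}=0$.
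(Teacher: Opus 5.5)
Your treatment of $\tilde D\mathcal F=0$ and $T^{\tilde D}=0$ is exactly the paper's. For $\tilde DG=0$ you take a genuinely different route, and it works.

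\textbf{Common reduction.} Both arguments show that $A_u$ preserves $E_\pm$. Both do so by proving that $\gamma(x,y,z)=\langle (D_x\mathcal F)y,z\rangle$, for $x\in U^\perp$, vanishes unless $x,y,z$ have the same type.

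\textbf{The paper's route.} It uses $T^D=0$ to rewrite this as the bracket identity $\langle [v,\mathcal F u]-\mathcal F[v,u],w\rangle=0$. It then checks that identity case by case on the splittings of $E_\pm^{\mathbb C}$ along $L$, $\bar L$ and $u_0$. This uses involutivity of the $i$-eigenbundles of $\mathcal F$ and $G^{\mathrm{end}}\mathcal F$, and also $\mathbf L_{u_0}\mathcal F=0$ when $v=u_0$.

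\textbf{Your route.} You stay at the tensorial level. You use the identity
\[ \langle N_{\mathcal F}(u,v),w\rangle=\sum_{(u,v,w)}\big(\gamma(\mathcal F u,v,w)+\gamma(u,v,\mathcal F w)\big) \]
on $U^\perp$, which is (\ref{rel-n-c}) with $T^D=0$. It holds equally for $\mathcal F_2$ with the same $D$, because $\mathrm{Ker}\,\mathcal F_2=U$ and $Du_0=0$.

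\textbf{The bookkeeping step.} The step you flag as the main obstacle closes in one line. Take $u,v,w\in U^\perp$ with $G^{\mathrm{end}}u=\epsilon_u u$, and similarly for $v,w$. Subtract the two identities and use $\gamma_2(x,y,z)=\gamma(x,y,G^{\mathrm{end}}z)$ to get
\[ \sum_{(u,v,w)}(1-\epsilon_u\epsilon_w)\,\gamma(\mathcal F u,v,w)=0 . \]
Now take $u,v\in E_\pm$ and $w\in E_\mp$.
\begin{itemize}
\item $\gamma(\mathcal F u,v,w)$ vanishes, since $D_{\mathcal F u}\mathcal F$ preserves $E_\pm$.
\item $\gamma(\mathcal F v,w,u)$ has coefficient $1-\epsilon_v\epsilon_u=0$.
\item What is left is $2\gamma(\mathcal F w,u,v)=0$.
\end{itemize}
Since $\mathcal F$ is invertible on $E_\mp\cap U^\perp$, this is exactly the vanishing you need.

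\textbf{Comparison.} Your route gives a uniform argument with no case analysis. It never needs the direction $u_0$: only $D_v\mathcal F$ with $v\in U^\perp$ enters $A_u$, and $(D_x\mathcal F)u_0=0$. So $\mathbf L_{u_0}\mathcal F=0$ is used only for torsion-freeness. The paper's route proves a slightly stronger statement, including $v=u_0$, and mirrors \cite[Lemma 34]{moscow}.

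\textbf{A small slip.} The second term of $\langle A_uG^{\mathrm{end}}v,w\rangle$ does not literally match its counterpart in $\langle A_uv,G^{\mathrm{end}}w\rangle$. In each pairing one side differentiates along $v$ or $w$ and the other along $G^{\mathrm{end}}v$ or $G^{\mathrm{end}}w$. This is harmless once the type vanishing is established.
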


\begin{proof} From Lemmas \ref{l1} and \ref{l2},  $\tilde{D} \mathcal F =0$
and $T^{\tilde{D}}=0$.
It remains to show that $\tilde{D} G=0$. 
Since $\mathcal F$ and $D_{u} \mathcal F$ are anticommuting $G$-skew-symmetric endomorphisms, 
$\mathcal F D_{u}\mathcal F$ is $G$-skew-symmetric. It remains  to show that $\{ A_{u}, \mathcal F \}$ is  $G$-skew-symmetric.
Since $\{ A_{u}, \mathcal F \}$ is  $\langle\cdot , \cdot \rangle$-skew-symmetric
(see the proof of Lemma 
\ref{l1}),  it is sufficient to prove that 
$\{ A_{u}, \mathcal F \}$  preserves $E_{\pm }$. For this, we will show that 
\begin{equation}\label{decompos-applic-0}
\langle (D_{v} \mathcal F)u, w\rangle =0,\ \forall u\in \Gamma (E),\ v\in \Gamma (E_{\pm}),\ w\in \Gamma (E_{\mp}). 
\end{equation}
(Relation  \eqref{decompos-applic-0} implies that $A_u$, and therefore $\{ A_{u}, \mathcal F \}$, preserves $E_{\pm }$). 
Let $v\in \Gamma (E_{+})$ and $w\in \Gamma (E_{-})$ (the argument for $v\in \Gamma (E_{-})$ and $w\in \Gamma (E_{+})$ 
is similar).   Relation (\ref{decompos-applic-0})
with  $u\in \Gamma (E_{+})$  follows from the fact that $D$  and $\mathcal F$ preserve $E_{+}.$
Let $u\in \Gamma (E_{-}).$ Since  $T^{D} =0$, 
\begin{equation}\label{tors-fin}
\langle D_{v} u, w\rangle = \langle D_{u}v, w\rangle -\langle D_{w} v, u\rangle +\langle [v,u],w\rangle
\end{equation}
and similarly for $u$ replaced by $\mathcal F u$, relation
(\ref{decompos-applic-0}) becomes
\begin{equation}\label{fin}
\langle [v, \mathcal F u] - \mathcal F [v,u] , w\rangle =0,
\end{equation}
for any $u\in \Gamma (E_{-}),\ v\in\Gamma (E_{+}),\ w\in \Gamma (E_{-}).$ 
Let $L$ be the $i$-eigenbundle of $\mathcal F$. Then $(E_{\pm})^{\mathbb{C}}$ decompose as
\begin{align}
\nonumber&  (E_{-})^{\mathbb{C}} = (E_{-})^{\mathbb{C}} \cap L \oplus (E_{-})^{\mathbb{C}} \cap \bar{L}\\
\nonumber& (E_{+})^{\mathbb{C}} = \mathrm{span}_{\mathbb{C}} (u_{0}) \oplus (E_{+})^{\mathbb{C}} \cap 
L \oplus (E_{+})^{\mathbb{C}} \cap \bar{L}
\end{align}
if $n = \mathrm{dim}\, M$ is even and
\begin{align}
\nonumber& (E_{-})^{\mathbb{C}} = \mathrm{span}_{\mathbb{C}} (u_{0}) \oplus (E_{-})^{\mathbb{C}} \cap 
L \oplus (E_{-})^{\mathbb{C}} \cap \bar{L}\\
\nonumber&  (E_{+})^{\mathbb{C}} = (E_{+})^{\mathbb{C}} \cap L \oplus (E_{+})^{\mathbb{C}} \cap \bar{L}
\end{align}
if $n$ is odd. 
Relation  (\ref{fin}) can be proved  by taking $u$, $v$, $w$  sections of various 
components in the above decompositions.  For example,  if $n$ is even and $v = u_{0}$, 
relation (\ref{fin}) follows from $\mathbf L_{u_{0}} \mathcal F =0$ 
(which holds as $\mathcal F$ is integrable). The other cases can be treated similarly and use the integrability  of both $\mathcal F$ and
$G^{\mathrm{end}} \mathcal F .$ 
 A similar argument was employed in  \cite[Lemma 34]{moscow}.
\end{proof}

\section{The space of adapted  generalized connections}

Let $E$ be an odd exact Courant algebroid over an $n$-dimensional  manifold $M$.

 \begin{defn}
i) Let $\mathcal Q$ be a set of tensor fields on $E$. A generalized connection $D$ on $E$ is called {\cmssl adapted to $\mathcal Q$}
if it is torsion-free and $DA=0$ for any $A\in \mathcal Q $.\

ii) A generalized connection adapted to a $B_{n}$-generalized  complex (respectively to a $B_{n}$-generalized pseudo-K\"{a}hler)  structure 
is  called  a {\cmssl   $B_{n}$-complex}  (respectively  {\cmssl  $B_{n}$-K\"{a}hler})  {\cmssl generalized connection}.
\end{defn}

\begin{notation}{\rm   
In the next theorem we  denote by   $\mathrm{sk} : S^{2} (E^{\mathbb{C}} )^{*} \otimes (E^{\mathbb{C}} )^{*} \rightarrow 
(E^{\mathbb{C}})^{*} \otimes \Lambda^{2}  (E^{\mathbb{C}})^{*}$   the map
\begin{equation}\label{skew-def-1}
(\mathrm{sk}\, \sigma )(u, v,w):= \sigma (u, v,w) -\sigma (u, w,v),\ \forall u, v,w.
\end{equation}
The sections   $\eta$, $\eta_{\pm}$
are considered as sections of $S^{2} (E^{\mathbb{C}} )^{*}\otimes (E^{\mathbb{C}})^{*}$ by means of the natural inclusions 
of $S^{2} L^{*}\otimes 
\bar{L}^{*}$ and $S^{2} E_{\pm , \mathcal F}^{*}\otimes 
\bar{E}_{\pm , \mathcal F}^{*}$  in $S^{2} (E^{\mathbb{C}})^{*}\otimes 
(E^{\mathbb{C}})^{*}$   defined by  the decompositions 
$E^{\mathbb{C}}  = L\oplus \bar{L} \oplus  U^{\mathbb{C}}$ 
and  $E^{\mathbb{C}}  =  E_{+, \mathcal F} \oplus \bar{E}_{+, \mathcal F} \oplus 
E_{-, \mathcal F} \oplus \bar{E}_{-, \mathcal F} \oplus U^{\mathbb{C}}$ 
induced by  $\mathcal F$ and $(G, \mathcal F )$  respectively,
where $L$ is the $i$-eigenbundle of $\mathcal F$,  $U= \mathrm{Ker}\, \mathcal F$,  $E_{\pm}$ are the $\pm1$ eigenbundles of $G^{\mathrm{end}}$
and   
$E_{\pm , \mathcal F} = E_{\pm}^{\mathbb{C}} \cap L$.
For $\sigma \in \Gamma  ( 
(E^{\mathbb{C}})^{*} \otimes \Lambda^{2}  (E^{\mathbb{C}})^{*})$, $\mathrm{Re}\, \sigma \in \Gamma (
E^*  \otimes \Lambda^{2}  (E^{*}))$ maps $u, v, w\in \Gamma (E)$ to the real part of $\sigma (u, v, w).$ }
\end{notation}

The next theorem is our main result in this section.

\begin{thm}\label{adapted} i) Let $\mathcal F$ be a $B_{n}$-generalized  complex structure on $E$ with $i$-eigenbundle $L$.   
Any two $B_{n}$-complex  generalized connections  are related by  
\begin{equation}
\tilde{D} = D +\mathrm{Re}\,  ( \mathrm{sk}\, \eta ),
\end{equation}
where $\eta \in \Gamma ( S^{2} L^{*}\otimes 
\bar{L}^{*})$.

ii) Let $(G, \mathcal F )$ be a $B_{n}$-generalized pseudo-K\"{a}hler structure on $E$.
Any  two $B_{n}$-K\"{a}hler  generalized connections  are related by
\begin{equation}
\tilde{D} = D +\mathrm{Re}\left(   \mathrm{sk}\,  (\eta_{+}  + \eta_{-} ) \right),
\end{equation}
where  $\eta_{\pm} \in \Gamma ( S^{2} E_{\pm , \mathcal F}^{*}\otimes 
\bar{E}_{\pm , \mathcal F}^{*})$.
\end{thm}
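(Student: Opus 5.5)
The plan is to study the difference $S:=\tilde{D}-D$ of two adapted generalized connections. It is a tensor $S\in\Gamma(E^*\otimes\Lambda^2E^*)$, written $S_u(v,w)=\langle S_uv,w\rangle$, and is skew in $(v,w)$ because both connections are metric for $\langle\cdot,\cdot\rangle$. The constraints on $S$ are as follows.
\begin{itemize}
\item Since $D\mathcal F=\tilde{D}\mathcal F=0$, each $S_u$ commutes with $\mathcal F$.
\item By Lemma \ref{pr-add-1}, both $D$ and $\tilde{D}$ kill $u_0$, so $S_uu_0=0$.
\item From the definition of torsion, $T^{\tilde{D}}-T^{D}$ is the cyclic sum $\sum_{(u,v,w)}S_u(v,w)$, so torsion-freeness of both connections means this cyclic sum vanishes.
\end{itemize}
Conversely, any $S$ with these three properties added to a $B_n$-complex generalized connection gives another one. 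So part i) reduces to the linear-algebra statement that these properties are equivalent to $S=\mathrm{Re}(\mathrm{sk}\,\eta)$ with $\eta\in\Gamma(S^2L^*\otimes\bar L^*)$. Existence of some adapted $D$ is Theorem \ref{complex-conn-cor}, but it is not needed for the comparison.

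For i), I complexify and use $E^{\mathbb C}=L\oplus\bar L\oplus U^{\mathbb C}$. A skew form commuting with $\mathcal F$ and killing $u_0$ pairs $L$ only with $\bar L$: if $b,b'\in L$, then $S_u(b,b')=\langle \mathcal F S_ub,\mathcal F b'\rangle\cdot(-1)$, which forces it to vanish. Hence $S_a$ is determined by $\sigma(a,b,c):=S_a(b,c)$ for $b\in L$, $c\in\bar L$, with $S_a(c,b)=-\sigma(a,b,c)$. I then evaluate the cyclic condition on the possible types of arguments.
\begin{itemize}
\item On $(u_0,b,c)$ it gives $S_{u0}(b,c)=0$, since the other two terms contain $u_0$ in a form slot. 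So $\sigma$ vanishes when its first slot is in $U$.
\item On $(b_1,b_2,c)$ with $b_i\in L$, $c\in\bar L$, it gives $\sigma(b_1,b_2,c)-\sigma(b_2,b_1,c)=0$, because $S_c(b_1,b_2)=0$. So $\eta:=\sigma|_{L\times L\times\bar L}$ lies in $S^2L^*\otimes\bar L^*$.
\item The triples of type $(c_1,c_2,b)$ give the complex-conjugate statement.
\item Triples entirely in $L$ or entirely in $\bar L$ give nothing.
\end{itemize}
Reality of $S$ forces the $\bar L$-first-slot part to be $\bar\eta$. Therefore $S=\mathrm{sk}\,\eta+\overline{\mathrm{sk}\,\eta}$, which equals $\mathrm{Re}(\mathrm{sk}\,\eta)$ after rescaling $\eta$ by $2$. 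The converse is direct: $\mathrm{Re}(\mathrm{sk}\,\eta)$ is skew in its last two slots, of type $(1,1)$ (so commutes with $\mathcal F$), kills $u_0$, and has vanishing cyclic sum by the symmetry of $\eta$.

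For ii), I would add the condition $S_u\in\mathfrak{so}(E_+)\oplus\mathfrak{so}(E_-)$, i.e.\ $S_u$ preserves $E_\pm$, which is equivalent to $G$-skew-symmetry given $\langle\cdot,\cdot\rangle$-skewness. Since $\mathcal F$ commutes with $G^{\mathrm{end}}$, the splitting refines to $L=E_{+,\mathcal F}\oplus E_{-,\mathcal F}$. Then $S_a(b,c)$ vanishes whenever $b$ and $c$ lie in different $E_\pm$ components. It remains to show that $\eta$ is block-diagonal also in its first slot. For $a\in E_{-,\mathcal F}$, $b\in E_{+,\mathcal F}$, $c\in\bar E_{+,\mathcal F}$, the cyclic identity gives $S_a(b,c)=-S_b(c,a)-S_c(a,b)=0$, since both remaining terms pair a $+$ vector with a $-$ vector. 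Together with the symmetry of $\eta$ in its first two slots, this shows $\eta=\eta_++\eta_-$ with $\eta_\pm\in S^2E_{\pm,\mathcal F}^*\otimes\bar E_{\pm,\mathcal F}^*$. The converse, that such $\mathrm{Re}(\mathrm{sk}(\eta_++\eta_-))$ preserves $E_\pm$, is immediate.

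The main obstacle is bookkeeping rather than any conceptual step: enumerating all type combinations in the cyclic condition without missing cases, including those involving $U^{\mathbb C}$ (which sits in $E_+$ or $E_-$ according to the parity of $n$). I also have to fix the normalization between $\mathrm{Re}$ and $S=\mathrm{sk}\,\eta+\overline{\mathrm{sk}\,\eta}$, which I would absorb into $\eta$.
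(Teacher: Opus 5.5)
Your argument is correct and follows the same strategy as the paper: the difference $S=\tilde D-D$ of two adapted connections is exactly an element of the first generalized prolongation $\mathfrak{h}(E)^{\langle 1\rangle}=\{\alpha\in E^*\otimes\mathfrak{h}(E)\mid \partial\alpha=0\}$, so the theorem reduces to computing this space for $\mathfrak{h}=\mathfrak{u}(m_1,m_2)$ and for $\mathfrak{u}(k_1,l_1)\oplus\mathfrak{u}(k_2,l_2)$. The difference lies only in the linear algebra: the paper uses exactness of $0\to S^3V^*_{\mathbb C}\to S^2V^*_{\mathbb C}\otimes V^*_{\mathbb C}\to V^*_{\mathbb C}\otimes\Lambda^2V^*_{\mathbb C}\to\Lambda^3V^*_{\mathbb C}\to 0$ to write $\ker\partial$ as the image of $\mathrm{sk}$ and then imposes the type constraints, whereas you impose the $(1,1)$-type first and read off the symmetry of $\eta$ directly from the cyclic identity on each type of triple, which also spells out the first-slot block-diagonality in part ii) (your argument with $a\in E_{-,\mathcal F}$, $b\in E_{+,\mathcal F}$, $c\in\bar E_{+,\mathcal F}$) that the paper only says follows ``similarly''.
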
 

In order to prove Theorem \ref{adapted}, we start with several general considerations. 
Consider a reduction of the structure group of  $E$ 
to a subgroup $H\subset \mathrm{O}(n+1,n)$, defined as the stabilizer of a system of tensors.
This system of tensors on $\mathbb{R}^{2n+1}$ gives rise to a system of sections  $\mathcal Q$ of the tensor algebra of $E$.
We denote by 
\[ \mathfrak{h}(E_p)\cong \mathfrak{h}= \mathrm{Lie}\, H  \subset \mathfrak{so}(n+1,n)\] 
the subalgebra of $\mathfrak{so}(E_p)$ stabilizing the sections of $\mathcal Q$  at the 
point $p\in M$ and by $\mathfrak{h}(E)\subset \mathfrak{so}(E)$ the corresponding subbundle.
From the theory developed in \cite{moscow},  the space 
of adapted generalized connections to a  $H$-structure 
is either empty or  is an affine space 
modelled on the space of sections of the first generalized prolongation 
\[ \mathfrak{h}(E)^{\langle 1\rangle} := \{ \alpha \in E^*\otimes \mathfrak{h}(E) \mid \partial \alpha =0\} . \]
Here 
$\partial \alpha \in \Lambda^3 E^*$ is defined for every $\alpha \in E^*\otimes \mathfrak{so}(E)$ by 
\[ (\partial \alpha) (u,v,w) := \sum_{(u,v,w)} \alpha (u,v,w),\] 
where $(u,v,w)$ indicates cyclic summation
and $\alpha$ is identified with an element of $E^*\otimes \Lambda^2E^*$ using the scalar product. 
In order to prove Theorem \ref{adapted}  it is  therefore sufficient  to determine the first  generalized prolongations 
for the Lie subalgebras of the structure groups of   $B_{n}$-generalized complex  and $B_{n}$-generalized pseudo-K\"{a}hler structures
(the existence of an adapted connection was proved in Theorems  \ref{complex-conn-cor} and \ref{GK:thm}).
This will be done in the next two subsections.

\begin{rem}{\rm The first generalized prolongation for subalgebras of $\mathfrak{so}(n)$  
has occurred in the classification of holonomy algebras of Lorentzian manifolds as 
a space of weak curvature endomorphisms \cite{LeistnerJDG}.  In the case of 
subalgebras of $\mathfrak{u}(m)\subset \mathfrak{so}(2m)$ it reduces to the ordinary first prolongation
of the corresponding complex Lie subalgebra of $\mathfrak{gl}(m,\mathbb{C})$. This is essentially 
the content of \cite[Proposition 3.1]{LeistnerJDG}, where on the left-hand side 
of the statement the space $B_h(\mathfrak{g}_0^\mathbb{C})$ should be replaced by 
$B_h(\mathfrak{g}_0)$ for the injectivity argument in the proof to work, compare Proposition \ref{um_1um_2:prop} below. }
\end{rem}

By a similar (slightly simpler) proof as that of Theorem \ref{adapted}, we  also obtain the following theorem. 
We emphasize that existence of an adapted generalized connection for any generalized complex or generalized K\"ahler structure was proven in \cite{moscow}, ensuring that the affine spaces in Theorem~\ref{ordinaryGK:thm} are non-empty. 
\begin{thm}\label{ordinaryGK:thm}   Let $E$ be a Courant algebroid over $M$  of even rank.\begin{enumerate}
\item[(i)] The space of generalized connections adapted to a generalized complex structure $\mathcal J$ 
on $E$ is an affine space modelled on the vector space 
\[ \{ \mathrm{Re}\,  ( \mathrm{sk}\, \eta ) \mid \eta \in \Gamma ( S^{2} L^{*}\otimes 
\bar{L}^{*}) \} = \Gamma (\mathfrak{u}(E)^{\langle 1\rangle }), \] 
where $L$ is the $(1,0)$-bundle  of $\mathcal J$ and $\mathfrak{u}(E) \subset \mathfrak{so}(E)$ is the bundle of Lie subalgebras which stabilizes $\mathcal J$. 
\item[(ii)] The space of generalized connections adapted to a  generalized K\"ahler structure $(G,\mathcal J)$ on $E$ is an affine space modelled on the vector space 
\[ \{ \mathrm{Re}\left(   \mathrm{sk}\,  (\eta_{+}  + \eta_{-} ) \right)\mid \eta_\pm \in \Gamma ( S^{2} (E^{\mathbb{C}}_\pm \cap L)^{*}\otimes 
(E^{\mathbb{C}}_\pm \cap\bar{L})^{*}) \} = \Gamma (\mathfrak{u}(E)_{G}^{\langle 1\rangle }), \]
where  $L$ is the $(1,0)$-bundle of $\mathcal J$, $E_{\pm}$ are the $\pm1$-eigenbundles of $G^{\mathrm{end}}$, 
$\mathfrak{u}(E)_{G} =\mathfrak{u}(E_+)\oplus \mathfrak{u}(E_-)\subset \mathfrak{u}(E)$ denotes the stabilizer of $G$ and $\mathfrak{u}(E)_{G}^{\langle 1\rangle }:= (\mathfrak{u}(E)_{G})^{\langle 1\rangle }= \mathfrak{u}(E_+)^{\langle 1\rangle }\oplus  \mathfrak{u}(E_-)^{\langle 1\rangle }$. 
\end{enumerate} 
\end{thm}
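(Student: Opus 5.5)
By the general theory recalled before Theorem~\ref{adapted} (from \cite{moscow}), the space of generalized connections adapted to an $H$-structure is either empty or an affine space modelled on $\Gamma(\mathfrak{h}(E)^{\langle 1\rangle})$. Existence of adapted generalized connections for generalized complex and generalized K\"ahler structures on even rank Courant algebroids is proven in \cite{moscow}, so these spaces are non-empty. The proof therefore reduces to pointwise linear algebra: I will identify $\mathfrak{u}(E)^{\langle 1\rangle}$ and $(\mathfrak{u}(E)_G)^{\langle 1\rangle}$ with the stated spaces of tensors. The same computation is what is needed for Theorem~\ref{adapted}, with $U^{\mathbb{C}}$ simply absent here.

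\textbf{Case (i).} Fix a point and work on $V=E_p$ with $\mathcal J$ orthogonal, $\mathcal J^2=-\mathrm{Id}$, so that $V^{\mathbb{C}}=L\oplus\bar L$ with $L,\bar L$ isotropic. An element $\alpha\in V^*\otimes\Lambda^2V^*$ takes values in $\mathfrak u(V)$ exactly when each $\alpha(u,\cdot,\cdot)$ is of type $(1,1)$. That is, $\alpha(u,\cdot,\cdot)$ vanishes on $L\times L$ and on $\bar L\times\bar L$. Hence, after complexification, $\alpha$ is determined by
\[
\beta(u,x,\bar z):=\alpha(u,x,\bar z), \qquad u\in V^{\mathbb{C}},\ x\in L,\ \bar z\in\bar L,
\]
together with skew-symmetry in the last two slots. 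The reality of $\alpha$ relates the parts with $u\in L$ and $u\in\bar L$ by complex conjugation.

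I now impose $\partial\alpha=0$ on each type of triple.
\begin{itemize}
\item On triples of type $(L,L,L)$ and $(\bar L,\bar L,\bar L)$ the condition is automatic.
\item On a triple $(x,y,\bar z)$ with $x,y\in L$, the term $\alpha(\bar z,x,y)$ vanishes by type. The remaining terms give
\[
\alpha(x,y,\bar z)+\alpha(y,\bar z,x)=\beta(x,y,\bar z)-\beta(y,x,\bar z)=0 .
\]
So the restriction $\eta:=\beta|_{L\times L\times\bar L}$ lies in $S^2L^*\otimes\bar L^*$.
\item On triples of type $(\bar L,\bar L,L)$ one obtains the complex conjugate condition. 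Conversely, this conjugate condition is implied by reality.
\end{itemize}
Every other type reduces to one of these by cyclic symmetry. Thus $\alpha$ is reconstructed from $\eta$ by skew-symmetrizing in the last two slots and adding the conjugate part. This shows $\alpha=\mathrm{Re}(\mathrm{sk}\,\eta)$, up to the normalizing factor built into the conventions. It remains to check that $\eta\mapsto \mathrm{Re}(\mathrm{sk}\,\eta)$ is injective and that its image is closed under $\partial=0$. Both follow by running the above type computation backwards.

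\textbf{Case (ii).} Since $G^{\mathrm{end}}$ commutes with $\mathcal J$, the bundles $E_\pm$ are $\mathcal J$-invariant, and $\mathfrak u(E)_G=\mathfrak u(E_+)\oplus\mathfrak u(E_-)$. Take $\alpha\in E^*\otimes\mathfrak u(E)_G$, so that $\alpha(u,v,w)=0$ whenever $v,w$ lie in different summands $E_\pm$. For $u\in E_\mp$ and $v,w\in E_\pm$, the cyclic sum $\partial\alpha(u,v,w)$ reduces to $\alpha(u,v,w)$, because the other two terms have mixed last slots. Hence $\partial\alpha=0$ forces $\alpha(E_\mp,E_\pm,E_\pm)=0$. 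Therefore
\[
\alpha\in E_+^*\otimes\mathfrak u(E_+)\ \oplus\ E_-^*\otimes\mathfrak u(E_-),
\]
and $\partial\alpha=0$ splits into $\partial\alpha_\pm=0$ on each summand. This gives
\[
(\mathfrak u(E)_G)^{\langle1\rangle}=\mathfrak u(E_+)^{\langle1\rangle}\oplus\mathfrak u(E_-)^{\langle1\rangle}.
\]
Applying case (i) to $(E_\pm,\mathcal J|_{E_\pm})$, whose $(1,0)$-bundle is $E_\pm^{\mathbb{C}}\cap L$, yields the stated description with $\eta_\pm$.

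The main obstacle is the careful type and reality bookkeeping in case (i): one must make sure that the conjugate components are exactly the ones forced by reality, and that the factor conventions in $\mathrm{sk}$ and $\mathrm{Re}$ make the map a bijection rather than merely a surjection.
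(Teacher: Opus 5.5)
Your proposal is correct, and its overall structure is the paper's: the affine-space principle and the existence results from \cite{moscow} reduce everything to a pointwise computation of the first generalized prolongation. The difference is in how that prolongation is computed.

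The paper proves Theorem~\ref{ordinaryGK:thm} by running the proofs of Propositions~\ref{1stprol:prop} and~\ref{um_1um_2:prop} without the line $U$. It invokes exactness of $0\to S^3V^*_{\mathbb C}\to S^2V^*_{\mathbb C}\otimes V^*_{\mathbb C}\stackrel{\mathrm{sk}}{\longrightarrow}V^*_{\mathbb C}\otimes\Lambda^2V^*_{\mathbb C}\stackrel{\partial}{\longrightarrow}\Lambda^3V^*_{\mathbb C}\to 0$ to write $\ker\partial=\mathrm{im}\,\mathrm{sk}$, and then sorts the $\mathrm{sk}$-images by type. Case (ii) is declared similar, using the finer decomposition $V^*_{+,\mathcal F}\wedge\bar V^*_{+,\mathcal F}\oplus V^*_{-,\mathcal F}\wedge\bar V^*_{-,\mathcal F}$ of $\mathfrak h^{\mathbb C}$.

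You instead evaluate $\partial\alpha$ directly on type-homogeneous triples for a $(1,1)$-valued $\alpha$. This buys several things:
\begin{itemize}
\item It needs no exact sequence.
\item It makes injectivity and the normalization explicit. With your $\eta$ one gets $\alpha=\mathrm{Re}(\mathrm{sk}(2\eta))$, which is harmless.
\item It avoids a step the paper passes over quickly: checking that $\mathrm{im}\,\mathrm{sk}\cap(V^*_{\mathbb C}\otimes\mathfrak h^{\mathbb C})$ is spanned by $\mathrm{sk}$-images of type-pure elements.
\item In (ii), your observation that $\partial\alpha(u_\mp,v_\pm,w_\pm)=\alpha(u_\mp,v_\pm,w_\pm)$ gives a real proof of the splitting $(\mathfrak u(E_+)\oplus\mathfrak u(E_-))^{\langle1\rangle}=\mathfrak u(E_+)^{\langle1\rangle}\oplus\mathfrak u(E_-)^{\langle1\rangle}$. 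The statement asserts this splitting, and it cleanly reduces (ii) to (i).
\end{itemize}

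The paper's route buys uniformity. It exhibits the prolongation directly as an $\mathrm{sk}$-image, which is the form in which the answer is stated. The same argument also covers the $B_n$ case of Theorem~\ref{adapted}, where your method would need one additional cross-term check for directions along $u_0$.
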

Note that fiber-wise $\mathfrak{u}(E)_p\cong \mathfrak{u}(k,\ell )$ ($p\in M$), where $(2k,2\ell)$ is the signature of the scalar product on the 
Courant algebroid $E$. Similarly, $\mathfrak{u}(E_\pm)_G|_p\cong 
\mathfrak{u}(k^\pm ,\ell^\pm)$, where $(2k^\pm ,2\ell^\pm)$ is the signature of $E_\pm\subset (E,\langle \cdot , \cdot \rangle )$.
In particular, $G$ is positive definite if and only if $k^-=\ell^+=0$.

\subsection{$B_{n}$-complex generalized connections}
The structure group of a $B_{n}$-generalized almost complex structure $\mathcal{F}$ on $E$  is 
\[ H = \mathrm{O}(1) \times \mathrm{U}(m_1,m_2),\quad (m_1,m_2) = \begin{cases} (m,m)\quad \mbox{if}\quad n=2m\\
(m+1,m) \quad\mbox{if}\quad n=2m+1.\end{cases}\]
So 
\[ \mathfrak{h} = \mathfrak{u}(m_1,m_2) \subset \mathfrak{so}(2m_1,2m_2)\subset \mathfrak{so}(V),\quad V= \mathbb{R}^{n+1,n}= \mathbb{R}^{2m_1,2m_2} \oplus \mathbb{R},\]
where $\mathbb{R}$ denotes the orthogonal complement of $\mathbb{R}^{2m_1,2m_2}$, which is a positive definite line if $n$ is even and negative definite otherwise. 
The vector space $V$ is endowed with a skew-symmetric operator, which we also denote by  $\mathcal F$, that  induces a complex structure on $\mathbb{R}^{2m_1,2m_2}$ and vanishes on $\mathbb{R}$.

\begin{prop} \label{1stprol:prop}The generalized first prolongation of the Lie algebra 
\[  \mathfrak{h} = \mathfrak{u}(m_1,m_2) \subset \mathfrak{so}(V)\] 
is given by 
\[  \mathfrak{h}^{\langle 1\rangle }  = \{ \mathrm{Re} \left( \mathrm{sk} (\sigma) \right) \mid  \sigma \in S^2V_\mathcal F^* \otimes \bar{V}_\mathcal F^*\}\cong  S^2\mathbb{C}^n \otimes \mathbb{C}^n,\]
where $V_\mathcal F\cong \mathbb{C}^n$ is the $i$-eigenspace of $\mathcal F$.
Equivalently, 
\[  \mathfrak{h}^{\langle 1\rangle }  = \mathrm{span} \{ \eta + \bar{\eta} \mid \eta = \alpha \otimes  (\beta \wedge \bar{\gamma} )+ \beta \otimes  (\alpha \wedge \bar{\gamma}),\quad 
\alpha, \beta , \gamma \in V_\mathcal F^* \}.\]
\end{prop}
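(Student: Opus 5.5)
We compute $\mathfrak{h}^{\langle 1\rangle}$ by complexifying. Write $V^{\mathbb{C}} = V_{\mathcal F}\oplus \bar V_{\mathcal F}\oplus U^{\mathbb{C}}$, with $U=\mathbb{R}$ the kernel of $\mathcal F$. Via the scalar product, $V_{\mathcal F}^*\cong \bar V_{\mathcal F}$, since $V_{\mathcal F}$ is isotropic and pairs non-degenerately with $\bar V_{\mathcal F}$. The complexified algebra $\mathfrak{h}^{\mathbb{C}}=\mathfrak{gl}(V_{\mathcal F})$, viewed inside $\Lambda^2 (V^{\mathbb{C}})^*$, is exactly $V_{\mathcal F}^*\wedge \bar V_{\mathcal F}^*$, i.e.\ the $(1,1)$-forms that vanish on $U$. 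An element $\alpha\in V^*\otimes\mathfrak{h}$ is therefore a real element of $(V^{\mathbb{C}})^*\otimes (V_{\mathcal F}^*\wedge\bar V_{\mathcal F}^*)$. We decompose its first slot according to $(V^{\mathbb{C}})^* = V_{\mathcal F}^*\oplus\bar V_{\mathcal F}^*\oplus U^*$:
\[
\alpha = \alpha^{1,0}+\alpha^{0,1}+\alpha^{U}, \qquad \alpha^{0,1}=\overline{\alpha^{1,0}} .
\]

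Next we evaluate $\partial\alpha=0$ on triples of types. Since every $\alpha(x,\cdot,\cdot)$ vanishes whenever $U$ enters its last two slots, the only part of $(\partial\alpha)(u_0,v,w)$ that survives is $\alpha^U(u_0,v,w)$. Thus $\partial\alpha=0$ forces $\alpha^U=0$. On triples $(v_1,v_2,\bar w)$ with $v_i\in V_{\mathcal F}$ and $\bar w\in\bar V_{\mathcal F}$, the term $\alpha(\bar w,v_1,v_2)$ vanishes because $\alpha$ is $(1,1)$ in its last two slots. The condition becomes
\[
\alpha^{1,0}(v_1,v_2,\bar w)=\alpha^{1,0}(v_2,v_1,\bar w).
\]
So $\sigma(v_1,v_2,\bar w):=\alpha^{1,0}(v_1,v_2,\bar w)$ defines an element of $S^2V_{\mathcal F}^*\otimes\bar V_{\mathcal F}^*$, and $\alpha^{1,0}=\mathrm{sk}(\sigma)$ by the convention (\ref{skew-def-1}). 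Triples of type $(v,\bar w_1,\bar w_2)$ give the conjugate condition, which is automatic by reality. Triples of pure type $(3,0)$ or $(0,3)$ vanish trivially.

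Hence $\alpha=\mathrm{sk}\,\sigma+\overline{\mathrm{sk}\,\sigma}=2\,\mathrm{Re}(\mathrm{sk}\,\sigma)$. Conversely, for any such $\sigma$ the element $\mathrm{Re}(\mathrm{sk}\,\sigma)$ takes values in $\mathfrak{h}$ and satisfies $\partial=0$, by the same type computation read backwards. Both directions therefore hinge on verifying carefully that the type bookkeeping sends $\partial\alpha$ to exactly these symmetry conditions.

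The isomorphism with $S^2\mathbb{C}^n\otimes\mathbb{C}^n$ follows because $\sigma\mapsto\mathrm{Re}(\mathrm{sk}\,\sigma)$ is injective: $\sigma$ is recovered as the $(1,0)$-component in the first slot. The main obstacle is the linear-algebra identification: one must keep the signs and conjugations straight in $V_{\mathcal F}^*\cong\bar V_{\mathcal F}$, check that no hidden constraint arises from the $U$-slot, and check that $\mathrm{Re}$ of the $(1,0)$ part fully determines $\alpha$.

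For the equivalent spanning description, note that $S^2V_{\mathcal F}^*\otimes\bar V_{\mathcal F}^*$ is spanned by the elements $(\alpha\beta)\otimes\bar\gamma$ with $\alpha,\beta,\gamma\in V_{\mathcal F}^*$. Applying $\mathrm{sk}$ to such an element gives, up to a factor, $\alpha\otimes(\beta\wedge\bar\gamma)+\beta\otimes(\alpha\wedge\bar\gamma)$ under the identification $\mathrm{sk}(a\otimes b\otimes c)=a\otimes(b\wedge c)$. Taking real parts yields $\eta+\bar\eta$.
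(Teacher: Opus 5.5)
Your proof is correct, but it takes a different route from the paper's.

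\textbf{The paper's route.} It starts from the exactness of $0\rightarrow S^3V^*_{\mathbb C}\rightarrow S^2V^*_{\mathbb C}\otimes V^*_{\mathbb C}\stackrel{\mathrm{sk}}{\longrightarrow}V^*_{\mathbb C}\otimes\Lambda^2V^*_{\mathbb C}\stackrel{\partial}{\longrightarrow}\Lambda^3V^*_{\mathbb C}\rightarrow 0$. Hence $\ker\partial$ on all of $V^*_{\mathbb C}\otimes\Lambda^2V^*_{\mathbb C}$ is spanned by the elements $u\otimes(v\wedge w)+v\otimes(u\wedge w)$. A type argument then decides which of these lie in $V^*_{\mathbb C}\otimes(V^*_{\mathcal F}\wedge\bar V^*_{\mathcal F})$, and the answer is the image under $\mathrm{sk}$ of $S^2V^*_{\mathcal F}\otimes\bar V^*_{\mathcal F}\oplus S^2\bar V^*_{\mathcal F}\otimes V^*_{\mathcal F}$.

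\textbf{Your route.} You never invoke exactness. You split the first slot of $\alpha$ by type and evaluate $\partial\alpha$ on typed triples. The cases you list (a $U$-entry, or entries of type $(3,0)$, $(2,1)$, $(1,2)$, $(0,3)$) are exhaustive because $\partial\alpha$ is alternating. This gives $\alpha^U=0$ and the symmetry of $\sigma=\alpha^{1,0}|_{V_{\mathcal F}\times V_{\mathcal F}\times\bar V_{\mathcal F}}$ directly. The converse needs only the identity $\partial\circ\mathrm{sk}=0$.

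\textbf{What each buys.} Your version defines $\sigma$ canonically from $\alpha$, so there is no ambiguity modulo $S^3V^*_{\mathbb C}$ to resolve. It makes the vanishing of the $U$-component and the injectivity of $\sigma\mapsto\mathrm{Re}(\mathrm{sk}\,\sigma)$ explicit. It also sidesteps the paper's somewhat quick passage from ``linear combinations of elements $u\otimes(v\wedge w)+v\otimes(u\wedge w)$'' to a type condition on each summand. The paper's version gives a structural picture: $\mathfrak h^{\langle 1\rangle}$ appears as the part of $\mathfrak{so}(V)^{\langle 1\rangle}=\mathrm{sk}(S^2V^*\otimes V^*)$ that takes values in $V^*\otimes\mathfrak h$. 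The paper reuses this for $\mathfrak u(k_1,l_1)\oplus\mathfrak u(k_2,l_2)$, though your typed-triple computation carries over there equally well.

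\textbf{One small point.} In your final paragraph, the real span of the elements $\eta+\bar\eta$ exhausts $\{\mathrm{Re}(\mathrm{sk}\,\sigma)\}$ because $\eta$ depends complex-linearly on its first covector. Complex coefficients in $\sigma$ can therefore be absorbed there, and this should be said.
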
  
\begin{proof}
The generalized first prolongation consists of the real elements in the kernel of 
\begin{equation}\label{map-c}  
\partial : V^*_\mathbb{C}\otimes V_\mathcal F^{*} \wedge \bar{V}_\mathcal F^{*}\rightarrow \Lambda^3V^*_\mathbb{C}.
\end{equation}
Consider the skew-symmetrization  
\begin{equation}\label{sk-1}\mathrm{sk} :  S^2V^*_\mathbb{C} \otimes V^*_\mathbb{C} \rightarrow V^*_\mathbb{C} \otimes \Lambda^2V^*_\mathbb{C}
\end{equation}
defined by (\ref{skew-def-1}).
From the exactness of the sequence 
\begin{equation}\label{map-complex}
0  \rightarrow  S^3V_{\mathbb{C}}^*
\rightarrow  S^2 V_{\mathbb{C}}^*\otimes V_\mathbb{C}^*\stackrel{\mathrm{sk}}{\longrightarrow}  V_\mathbb{C}^*\otimes \Lambda^2 
V_\mathbb{C}^*\stackrel{\partial}{\longrightarrow} \Lambda ^3 V_{\mathbb{C}}^*\rightarrow 0,
\end{equation}
the kernel of the map (\ref{map-c}) consists of linear combinations of elements of the form
 $u \otimes (v\wedge w) + v\otimes (u\wedge w)$, where 
$u, v, w\in V_{\mathbb{C}}^{*}$  and $v\wedge w$, $u\wedge w$ belong to $V^{*}_{\mathcal F}\wedge \bar{V}^{*}_{\mathcal F}$.  
The latter condition implies (for non-zero elements) that if $w\in V^{*}_{\mathcal F}$ then $u, v\in \bar{V}_{\mathcal F}^{*}$
and conversely,   if $w\in \bar{V}^{*}_{\mathcal F}$ then $u, v\in {V}_{\mathcal F}^{*}$.
We obtain that  the kernel 
of the map (\ref{map-c}) 
is given by the (faithful) image of 
\[  S^2V_\mathcal F^*\otimes \bar{V}_\mathcal F^* \oplus  S^2\bar{V}_\mathcal F^* \otimes V_\mathcal F^*\]
under the map  (\ref{sk-1}). Our claim follows.
\end{proof}

\subsection{$B_{n}$-K\"ahler  generalized connections}

The structure group of a $B_{n}$-generalized almost pseudo-Hermitian structure $(G, \mathcal{F})$ on $E$ is 
\[ H = \mathrm{O}(1) \times \mathrm{U}(k_1, l_{1})\times \mathrm{U}(k_2, l_{2})\]
where
\[ (k_1+ k_{2},l_1+ l_{2}) = \begin{cases} (m,m)\quad \mbox{if}\quad n=2m\\
(m+1,m) \quad\mbox{if}\quad n=2m+1.\end{cases}\]
So we need to calculate the first generalized prolongation of the Lie algebra 
\begin{align} 
\nonumber& \mathfrak{h} = \mathfrak{u}(k_1, l_{1})\oplus \mathfrak{u}(k_2, l_{2}) \subset \mathfrak{so}(2k_1, 2l_{1}) \oplus \mathfrak{so}(2k_{2}, 2l_{2})\subset \mathfrak{so}(V),\\
\nonumber&  V= \mathbb{R}^{n+1,n}= \mathbb{R}^{2(k_1+ k_{2}),2(l_1+ l_{2})} \oplus \mathbb{R}
\end{align}
where $\mathbb{R}^{2k_{1}, 2l_{1}} = V_{+}$, $\mathbb{R}^{2k_{2}, 2l_{2}} = V_{-}$ are the $\pm1$-eigenspaces of $G^{\mathrm{end}}$ on the orthogonal complement 
$\mathbb{R}^{2( k_{1} + k_{2}), 2( l_{1} + l_{2})}$  of 
the positive (if $n$ is even) or negative  (otherwise)  definite line
$\mathbb{R}$. We denote by  $V_{+, \mathcal F}\cong \mathbb{C}^{k_{1} + l_{1}}$ and $V_{-, \mathcal F}\cong \mathbb{C}^{k_{2} + l_{2}}$. 
the $i$-eigenspaces of $\mathcal F$ on $V_{\pm}$.

\begin{prop}\label{um_1um_2:prop} The generalized first prolongation of the Lie algebra 
\[  \mathfrak{h} = \mathfrak{u}(k_1, l_{1})\oplus \mathfrak{u}(k_2, l_{2}) \subset \mathfrak{so}(V)\] 
is given by  $\mathfrak{h}^{\langle 1\rangle }  =  \mathfrak{u}(k_1 , l_{1})^{\langle 1\rangle }  \oplus \mathfrak{u}(k_2, l_{2})^{\langle 1\rangle }$, where 
\begin{eqnarray*} 
 \mathfrak{u}(k_1, l_{1})^{\langle 1\rangle } &=& \{ \mathrm{Re} \left( \mathrm{sk} (\sigma) \right) \mid  \sigma \in S^2V_{+,\mathcal F}^* \otimes \bar{V}_{+,\mathcal F}^*\}\cong 
  S^2\mathbb{C}^{k_1+ l_{1}} \otimes \mathbb{C}^{k_1+ l_{1} }\\
 \mathfrak{u}(k_2, l_{2})^{\langle 1\rangle } &=& \{ \mathrm{Re} \left( \mathrm{sk} (\sigma) \right) \mid  \sigma \in S^2V_{-,\mathcal F}^* \otimes \bar{V}_{-,\mathcal F}^*\}\cong 
  S^2\mathbb{C}^{k_2+ l_{2}} \otimes \mathbb{C}^{k_2+ l_{2}}.
\end{eqnarray*}
\end{prop}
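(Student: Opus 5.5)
I plan to mirror the proof of Proposition \ref{1stprol:prop}, working in the complexification and using the exact sequence (\ref{map-complex}). Complexify: $V_\mathbb{C}=V_{+,\mathcal F}\oplus\bar V_{+,\mathcal F}\oplus V_{-,\mathcal F}\oplus\bar V_{-,\mathcal F}\oplus\mathbb{C}$. Under the identification $\mathfrak{so}(V)\cong\Lambda^2V^*$, the complexified Lie algebra is
\[
\mathfrak{h}_\mathbb{C}=V_{+,\mathcal F}^*\wedge\bar V_{+,\mathcal F}^*\oplus V_{-,\mathcal F}^*\wedge\bar V_{-,\mathcal F}^*,
\]
because $\mathfrak{u}(k_i,l_i)_\mathbb{C}$ consists of the $(1,1)$-forms on $V_\pm$. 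Then $\mathfrak{h}^{\langle 1\rangle}$ is the set of real elements in the kernel of $\partial$ restricted to $V_\mathbb{C}^*\otimes\mathfrak{h}_\mathbb{C}$.

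The first step is to use exactness of (\ref{map-complex}). Every $\alpha\in\ker\partial$ equals $\mathrm{sk}(\sigma)$ for some $\sigma\in S^2V_\mathbb{C}^*\otimes V_\mathbb{C}^*$, and $\sigma$ is unique modulo $S^3V_\mathbb{C}^*$. I would pick the splitting where $\sigma$ is chosen with $\sigma$ having no totally symmetric part, and decompose $\sigma$ by types with respect to the five summands of $V_\mathbb{C}^*$. Since $\mathrm{sk}$ is $H$-equivariant and preserves type components, $\alpha\in V^*_\mathbb{C}\otimes\mathfrak{h}_\mathbb{C}$ holds iff every type component $\sigma_{ABC}$ ($A,B$ symmetric slots, $C$ the last slot) yields $\mathrm{sk}$ values lying in $\mathfrak{h}_\mathbb{C}$. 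Here $(\mathrm{sk}\,\sigma)(u,v,w)=\sigma(u,v,w)-\sigma(u,w,v)$. For the $(v,w)$-part to be of type $X\wedge\bar X$ with $X\in\{V_{+,\mathcal F},V_{-,\mathcal F}\}$, the slots $B$ and $C$ must be a conjugate pair within the same $\pm$ block. By symmetry the same holds for the slots $A$ and $C$.

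The second step is the case analysis. If $C=\bar V_{+,\mathcal F}^*$, then $A=B=V_{+,\mathcal F}^*$, and we get $S^2V_{+,\mathcal F}^*\otimes\bar V_{+,\mathcal F}^*$. In the same way, $C=V_{+,\mathcal F}^*$ gives the conjugate term, and the two $-$ blocks give their analogues. Mixed types, for example $A\subset V_+$ and $C\subset V_-$, or any slot in the $\mathbb{C}$-line, are excluded. This uses the fact that the entire $\mathrm{sk}(\sigma)$, and not just individual terms, must lie in $\mathfrak{h}_\mathbb{C}$. That is legitimate once I argue that $\mathrm{sk}$ is injective on the complement of $S^3$ and that distinct type components of $\sigma$ map to linearly independent type components of $\mathrm{sk}\,\sigma$.

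The subtle point, which I expect to be the main obstacle, is that for a type $(A,B,C)$ with $A\neq B$ the $\mathrm{sk}$ image mixes the components $(A,B,C)$ and $(A,C,B)$. It must therefore be checked that these cannot cancel across blocks to produce something in $\mathfrak{h}_\mathbb{C}$. I would handle this by grouping the components of $\sigma$ according to the unordered triple of slot types and treating each triple separately.

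Finally, intersecting with the real elements gives $\mathrm{Re}\,\mathrm{sk}(\sigma_++\sigma_-)$ with $\sigma_\pm\in S^2V_{\pm,\mathcal F}^*\otimes\bar V_{\pm,\mathcal F}^*$. Faithfulness follows from $S^3V_\mathbb{C}^*\cap(S^2V_{\pm,\mathcal F}^*\otimes\bar V_{\pm,\mathcal F}^*)=0$. Combined with Proposition \ref{1stprol:prop} applied to each $V_\pm$ separately, this yields the stated direct sum decomposition and the isomorphisms with $S^2\mathbb{C}^{k_i+l_i}\otimes\mathbb{C}^{k_i+l_i}$.
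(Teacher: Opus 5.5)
Your plan is the paper's own route: complexify, use exactness of (\ref{map-complex}), sort by types, and read off $S^2V^*_{\pm,\mathcal F}\otimes\bar V^*_{\pm,\mathcal F}$. Your final answer is right. However, the case analysis as you set it up contains a false step, and the point you flag as the main obstacle is misdiagnosed.

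Write $X$ for $V^*_{+,\mathcal F}$ or $V^*_{-,\mathcal F}$, and $X\odot\bar X\subset S^2V^*_\mathbb{C}$ for the symmetric product. The map $\mathrm{sk}$ preserves only the unordered multiset of types, not the individual type components.
\begin{itemize}
\item On the multiset $\{X,X,\bar X\}$ your claim that distinct type components have linearly independent images fails. If $0\neq s\in S^3V^*_\mathbb{C}$ has this type and $s=s_1+s_2$ with $s_1\in S^2X\otimes\bar X$ and $s_2\in(X\odot\bar X)\otimes X$, then $0\neq\mathrm{sk}\,s_1=-\mathrm{sk}\,s_2$.
\item Your slot rule does not exclude components $\tau\in(X\odot\bar X)\otimes X$. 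The image $\mathrm{sk}\,\tau$ has a part in $X\otimes(\bar X\wedge X)$, which is admissible. Its other part lies in $\bar X\otimes\Lambda^2X$; this is just the skew part of $\tau$ in its last two arguments, and it vanishes whenever $\tau$ is symmetric there.
\item With your normalization such components really occur. For $\alpha=\mathrm{sk}\,\rho$ with $0\neq\rho\in S^2X\otimes\bar X$, the representative without totally symmetric part is $\rho-\mathrm{Sym}(\rho)$. Its $(X\odot\bar X)\otimes X$-component is nonzero.
\end{itemize}
So the normalized $\sigma$ does not lie in $\bigoplus_\pm(S^2V^*_{\pm,\mathcal F}\otimes\bar V^*_{\pm,\mathcal F}\oplus S^2\bar V^*_{\pm,\mathcal F}\otimes V^*_{\pm,\mathcal F})$, contrary to what your enumeration concludes.

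The repair is to drop the normalization and argue multiset by multiset.
\begin{itemize}
\item For three distinct types, at most one component $X_a\otimes(X_b\wedge X_c)$ is admissible, since two conjugate pairs cannot fit into three distinct types. The map $\partial$ is injective on that component. So these multisets contribute nothing, and the ``cancellation across blocks'' you worry about cannot happen.
\item The multisets $\{a,a,a\}$, and $\{a,a,c\}$ with $\{a,c\}$ not a conjugate pair inside one block, have no admissible component at all.
\item For $\{X,X,\bar X\}$ (and its conjugate), the $\bar X\otimes\Lambda^2X$-part of $\alpha$ comes only from the $(X\odot\bar X)\otimes X$-component of $\sigma$. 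Its vanishing forces that component to be symmetric in its last two arguments, i.e.\ to be the corresponding component of some $s\in S^3V^*_\mathbb{C}$. Replacing $\sigma$ by $\sigma-s$ moves $\sigma$ into $S^2X\otimes\bar X$ without changing $\alpha$.
\end{itemize}
This gives exactly $\mathrm{sk}$ of $\bigoplus_\pm(S^2V^*_{\pm,\mathcal F}\otimes\bar V^*_{\pm,\mathcal F}\oplus S^2\bar V^*_{\pm,\mathcal F}\otimes V^*_{\pm,\mathcal F})$. Your faithfulness and reality arguments then finish the proof.

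The paper's proof, which refers to that of Proposition \ref{1stprol:prop}, passes over this same point. It simply asserts that the kernel is spanned by elements $u\otimes(v\wedge w)+v\otimes(u\wedge w)$ that are individually admissible.
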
   

\begin{proof} Using that  $\mathfrak{h}$ corresponds to the set of real points of 
\[ V_{+,\mathcal F}^*\wedge  \bar{V}_{+,\mathcal F}^*\oplus V_{-,\mathcal F}^*\wedge \bar{V}_{-,\mathcal F}^*\] 
under the identification of $\mathfrak{so}(V)\cong \Lambda^2V^*$, the claim easily follows from
an argument similar to 
 Proposition \ref{1stprol:prop}. 
\end{proof}

V. Cort\'es: vicente.cortes@math.uni-hamburg.de\

Department of Mathematics and Center for Mathematical Physics, University of Hamburg,  Bundesstrasse 55, D-20146, Hamburg, Germany.\\

L. David: liana.david@imar.ro\

Institute of Mathematics  `Simion Stoilow' of the Romanian Academy,   Calea Grivitei no.\ 21,  Sector 1, 010702, Bucharest, Romania.\\

M. Mirea: mireammi@yahoo.com\

Institute of Mathematics  `Simion Stoilow' of the Romanian Academy,   Calea Grivitei no.\ 21,  Sector 1, 010702, Bucharest, Romania.

\end{document}